\numberwithin{equation}{section}
\newtheorem{theorem}{Theorem}[section]
\newtheorem{lemma}[theorem]{Lemma}
\newtheorem{example}[theorem]{Example}
\newtheorem{cor}[theorem]{Corollary}
\newtheorem{rem}[theorem]{Remark}
\newtheorem{definition}[theorem]{Definition}
\definecolor{forestgreen}{rgb}{0.13, 0.55, 0.13}
\newcommand{\R}{\mathbb{R}}
\DeclareMathSymbol{\leqslant}{\mathalpha}{AMSa}{"36} 
\DeclareMathSymbol{\geqslant}{\mathalpha}{AMSa}{"3E} 
\DeclareMathSymbol{\eset}{\mathalpha}{AMSb}{"3F}     
\renewcommand{\leq}{\;\leqslant\;}                   
\renewcommand{\geq}{\;\geqslant\;}                   
\newcommand{\norm}[1]{\left\lVert#1\right\rVert}
\title[Weak solutions for singular multiplicative SDEs]{Weak solutions for singular multiplicative SDEs via regularization by noise}
\author{Florian Bechtold, Martina Hofmanov\'a}
\address{Fakultät für Mathematik, Universität Bielefeld, 33501 Bielefeld, Germany}
\email{fbechtold@math.uni-bielefeld.de, hofmanova@math.uni-bielefeld.de}
\begin{document}
\begin{abstract}
    We study multiplicative SDEs perturbed by an additive fractional Brownian motion on another probability space. Provided the Hurst parameter is chosen in a specified regime, we establish existence of probabilistically weak solutions to the SDE if the measurable diffusion coefficient merely satisfies an integrability condition. In particular, this allows to consider certain singular diffusion coefficients. 
\end{abstract}
\maketitle

\section{Introduction}
Consider the classical problem 
\[
dX_t=b(X_t)dt+\sigma(X_t)dB_t, \qquad X_0=x_0\in \R^d,
\]
where $B$ is an $n$-dimensional Brownian motion. As it is well-known, provided the coefficients $b:\R^d\to \R^d$ and $\sigma:\R^d\to \mathbb{M}^{d\times n}$ are Lipschitz continuous the above stochastic differential equation admits a unique strong solution \cite{Revuz1999}, \cite{oksendal2003}. If $b$, $\sigma$ are only continuous and satisfy a growth condition \cite{skoro}, \cite{Hofmanov2012}  or a Lyapunov condition \cite{Hofmanov20133}, it is still possible to obtain weak solutions. This can be done in passing by a tightness argument: Mollifying $b$ and $\sigma$, we obtain in a first step a sequence $(X^\epsilon)_\epsilon$ of associated solutions for which we can show tightness in appropriate spaces. By martingale arguments, it is then possible to identify all the terms with the corresponding limit in a second step, concluding the consideration. 
\newline
\newline
In the present work, we establish that the conditions imposed in particular on the diffusion coefficient $\sigma$ that ensure the existence of weak solutions can be further relaxed in the presence of an additional additive fractional Brownian motion. Our approach mainly relies on a robustification of the above two step strategy harnessing the regularizing effects due to local times of fractional Brownian motion and averaging operators \cite{harang2020cinfinity}, \cite{gubicat}, \cite{galeati2020noiseless}. For the reader's convenience, let us briefly recall the definition of the latter and how they are employed to establish regularization by noise results for ordinary differential equations. \newline\newline
For a $d$-dimensional fractional Brownian motion $w^H$ and some smooth nonlinearity $b:\R^d\to \R^d$, consider the problem

\[
Y_t=y_0+\int_0^t b(Y_s)ds-w^H_t.
\]
By a simple transformation, we see that $Y$ solves the above if and only if $X=Y+w^H$ solves
\begin{equation}
    X_t=y_0+\int_0^t b(X_s-w^H_s)ds.
    \label{regular ode}
\end{equation}
Heuristically, as one expects the oscillations of the fractional Brownian motion to dominate the oscillations of $X$, we may conjecture that locally, i.e. for $0<t-s\ll1$, the above Lebesgue integral should behave as
\begin{equation}
    \label{local approx}
    \int_s^t b( X_r-w^H_r)dr\simeq \int_s^tb( X_s-w^H_r)dr=:(T^{-w^H}_{s,t}b)(X_s),
\end{equation}
where $T^{-w^H}b$ denotes the averaging operator, which averages out the function $b$ by integrating along the highly fluctuating paths of $-w^H$. As it was remarked in \cite{gubicat}, \cite{galeati2020noiseless}, this averaging gives rise to a quantifiable regularization effect in the case of fractional Brownian motion. Alternatively to studying the regularization effect on the averaging operator directly, Harang and Perkowski \cite{harang2020cinfinity} observed that if $w^H$ admits a local time $L$, the above averaging operator may be rewritten thanks to the occupation times formula\footnote{We refer to Appendix for the basic definitions for local times and the occupation times formula .} as
\[
(T^{-w^H}_{s,t}b)(X_s)=\int_s^tb( X_s-w^H_r)dr=(b*L_{s,t})(X_s).
\]
By Young's inequality in Besov spaces \cite{kuhn2021convolution}, the spatial regularity of the averaging operator is therefore the sum of the regularity of $b$ and $L$, yielding also a quantifiable regularization. Let us remark at this point that while the results in \cite{galeati2020noiseless} are sharper, allowing even to consider non-autonomous equations, the approach in \cite{harang2020cinfinity} has the merit of being pathwise stable in the sense Theorem \ref{regularity of averaging operator} (see also  Remark \ref{comparaison averaging local time}). In our analysis, we will make use of both approaches and their respective merits.\\
\\
In both approaches this gain of regularity on the level of averaging operators can subsequently be exploited to reconstruct the Lebesgue integral in \eqref{regular ode} by ``sewing together" the local approximations in \eqref{local approx} by means of the Sewing Lemma\footnote{For the readers convenience, we state the Sewing Lemma in the corresponding section of the Appendix.}. Eventually, this allows for the study of an abstract non-linear Young problem
\begin{equation}
   X_t=y_0+(\mathcal{I}A^X)_t,
   \label{nonlinear-young}
\end{equation}
where $\mathcal{I}$ denotes the Sewing operator and $A^X$ the germ
\[
A_{s,t}^X=(T^{-w^H}_{s,t}b)(X_s).
\]
Remark that due to the quantifiable regularization of the averaging operator, \eqref{nonlinear-young} might be well posed even in the case of continuous or merely distributional $b$, therefore establishing a regularization by noise phenomenon with respect to the ordinary differential equation. 
\newline\newline
In the following, we intend to study
\[
Y_t=x_0+\int_0^t \sigma(Y_s)dB_s-w^H_t,
\]
or rather, again via the above transformation,
\begin{equation}
    X_t=x_0+\int_0^t\sigma(X_s-w^H_s)dB_s.
    \label{main}
\end{equation}
Since pathwise regularization by noise for singular drifts as sketched above is by now quite well understood, we set $b=0$ for readability focusing on the diffusion term.  However upon combining it with results from \cite{galeati2020noiseless}, \cite{harang2020cinfinity}, the same approach applies mutatis mutandis to \eqref{main} with a general, distributional drift $b$.\\
\\
Regularization by noise for multiplicative SDEs where the above Brownian motion $B$ is replaced by another fractional Brownian motion $\beta^{H'}$ of Hurst parameter $H'>1/2$ was recently studied by Galeati and Harang \cite{galeati2020regularizationn}. The main idea in \cite{galeati2020regularizationn} is that due to a probabilistic result in \cite{hairer2019averaging}, one can introduce an averaged multiplicative field
\[
(\Gamma^{-w^H}_t\sigma)(x):=\int_0^t \sigma(x-w^H_s)d\beta^{H'}_s
\]
whose regularity properties are inherited from the regularity properties of the averaged field $T^{-w^H}\sigma$ introduced above. 
While very powerful allowing in particular to consider distributional $\sigma$, the above strategy does not apply in the present setting of classical Brownian motion where $H'=1/2$. Refer to Remark \ref{comparison} for a further comparison of our results with the ones obtained in \cite{galeati2020regularizationn}.
\newline\newline 
In contrast with \cite{galeati2020regularizationn}, our considerations are mainly based on classical Ito-calculus, which we further robustify thanks to the regularizing effect of the averaging operator $(T^{-w^H}_t \mbox{Tr}(\sigma^* \sigma))=\norm{\sigma}_{HS}^2*L_{t}$: Essentially, in going through the Ito-isometry, we exploit a regularization effect on the second moment of the diffusion term.  A natural broad function space to consider that is also well behaved under the operator of ``taking the square" is therefore $\sigma\in L^p_x$. Notice however that as  $\mbox{Tr}(\sigma \sigma^*)$ has to be well-defined for this approach, we are unable to study regularization effects on distributional scales as in \cite{galeati2020regularizationn}. We obtain the following main result.
\begin{theorem}
\label{main theorem}
Let $w^H$ be a $d$-dimensional fractional Brownian motion of Hurst parameter $H$ on $(\Omega^H, \mathcal{F}^H, \mathbb{P}^H)$. For $p\in [2,\infty)$, let $\sigma\in L^p_x$. Suppose that $d/p<1$. Let $H$ be such that
\[
2H<(1+\frac{d}{(p/2)\wedge 4/3})^{-1}.
\]
Then there exists a $\mathcal{F}^H$ measurable set $\Gamma(\sigma)\subset \Omega^H$ such that $\mathbb{P}^H(\Gamma(\sigma))=1$ and such that for any $\omega^H\in \Gamma(\sigma)$ the stochastic differential equation
\[
X_t=x_0+\int_0^t\sigma(X_s-w^H_s(\omega^H))dB_s
\]
admits a weak solution on any bounded interval $[0,T]$.
\end{theorem}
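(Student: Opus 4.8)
The plan is to robustify the classical tightness-and-martingale-problem scheme recalled in the introduction, substituting for the (absent) continuity of $\sigma$ the pathwise-stable regularization encoded in Theorem~\ref{regularity of averaging operator}. Throughout, fix a realization $\omega^H$ belonging to the full-measure set $\Gamma(\sigma)\subset\Omega^H$ on which the averaging operator associated to $w^H(\omega^H)$ enjoys the regularity asserted there; write $w^H$ for the resulting frozen continuous path, so that all remaining randomness sits on the probability space carrying the Brownian motion $B$. First I would mollify, selecting smooth, compactly supported $\sigma^\epsilon\to\sigma$ with $\norm{\sigma^\epsilon}_{L^p}\le\norm{\sigma}_{L^p}$; the coefficient $x\mapsto\sigma^\epsilon(x-w^H_s)$ being smooth and bounded, the approximating equation
\begin{equation*}
X^\epsilon_t=x_0+\int_0^t\sigma^\epsilon(X^\epsilon_s-w^H_s)\dd B_s
\end{equation*}
admits for every $\epsilon>0$ a strong, hence weak, solution $X^\epsilon$.

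The heart of the argument is a uniform-in-$\epsilon$ a priori estimate. By the Burkholder--Davis--Gundy inequality, controlling the moments of the increments $X^\epsilon_t-X^\epsilon_s$ reduces to bounding
\begin{equation*}
\bE\Big|\int_s^t\norm{\sigma^\epsilon}_{HS}^2(X^\epsilon_r-w^H_r)\dd r\Big|^m,
\end{equation*}
since $\mathrm{Tr}((\sigma^\epsilon)^*\sigma^\epsilon)=\norm{\sigma^\epsilon}_{HS}^2$. Here I would exploit that $\norm{\sigma^\epsilon}_{HS}^2\in L^{p/2}$ uniformly, and that its averaging $T^{-w^H}_{s,t}\norm{\sigma^\epsilon}_{HS}^2=\norm{\sigma^\epsilon}_{HS}^2*L_{s,t}$ is, by Theorem~\ref{regularity of averaging operator}, Hölder in time and regular in space, with a spatial norm controlled by $\norm{\sigma}_{L^p}$. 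Feeding this into the sewing lemma lets me reconstruct the time integral along the \emph{moving} argument $X^\epsilon_r$ and bound it by $|t-s|^{\eta}$ times a power of the Hölder seminorm of $X^\epsilon$ itself, with $\eta$ as large as the constraint on $H$ permits; taking sufficiently high moments and running a buckling (Gronwall-type) argument then closes the estimate, yielding $\sup_\epsilon\bE[\norm{X^\epsilon}_{C^\gamma([0,T])}^m]<\infty$ for a suitable $\gamma>0$. It is precisely at this stage that the hypothesis $2H<(1+d/((p/2)\wedge4/3))^{-1}$ is consumed: the factor $p/2$ reflects that we regularize the square $\norm{\sigma}_{HS}^2\in L^{p/2}$, while the truncation at $4/3$ originates in the second-moment bookkeeping of the stochastic integral, and together they fix the spatial regularity of the local time, hence the admissible exponent $\eta$, at the level required for the buckling to close.

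With the uniform bounds in hand, Kolmogorov's continuity criterion gives tightness of the laws of $X^\epsilon$ in $C([0,T];\R^d)$; by Prokhorov's theorem I extract a weakly convergent subsequence, and by the Skorokhod representation theorem I realize the whole family on a common probability space with $\tilde X^\epsilon\to\tilde X$ and $\tilde B^\epsilon\to\tilde B$ almost surely. It remains to identify $\tilde X$ as a weak solution, which I would carry out through the martingale problem: one shows that $\tilde X_t-x_0$ is a continuous local martingale whose quadratic variation equals $\int_0^t(\sigma\sigma^*)(\tilde X_s-w^H_s)\dd s$, this last object being defined---despite the mere $L^{p/2}$ integrability of $\sigma\sigma^*$---as a nonlinear Young integral built from the averaging operator. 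A martingale representation theorem, after a possible enlargement of the space, then produces a Brownian motion $\hat B$ with $\tilde X_t=x_0+\int_0^t\sigma(\tilde X_s-w^H_s)\dd\hat B_s$.

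I expect the identification step to be the main obstacle. Because $\sigma$ is discontinuous, the nonlinearity $\sigma^\epsilon(X^\epsilon_\cdot-w^H_\cdot)$ does not converge under the naive almost-sure convergence $\tilde X^\epsilon\to\tilde X$, so the usual martingale-problem closure fails outright. The remedy is to pass to the limit not in the integrand but in the integral $X\mapsto\int_0^\cdot(\sigma\sigma^*)(X_s-w^H_s)\dd s$, using that this map is jointly continuous in $(X,\sigma)$---stability in the path and in the coefficient being exactly the pathwise-stable regularization guaranteed by Theorem~\ref{regularity of averaging operator}. Verifying this stability, simultaneously in the convergence $\sigma^\epsilon\to\sigma$ in $L^p$ and $\tilde X^\epsilon\to\tilde X$ uniformly, together with closing the buckling estimate under the sharp constraint on $H$, constitutes the technical core of the proof.
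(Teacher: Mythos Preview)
Your plan matches the paper's argument closely: mollify, derive uniform moment bounds via BDG plus the regularized averaging of $\|\sigma\|_{HS}^2$ and a buckling inequality, then tightness, Skorokhod, and a martingale-problem identification. Two refinements deserve mention. First, the a priori estimate (Lemma~\ref{a priori bound}) rests on the \emph{stochastic} sewing lemma rather than the deterministic one: the germ $A^\epsilon_{s,t}=(\|\sigma_\epsilon\|_{HS}^2*L_{s,t})(X^\epsilon_s)$ is controlled only in $L^{m/2}(\Omega)$, and it is the conditional bound $\|\mathbb E[(\delta A^\epsilon)_{s,u,t}\,|\,\mathcal F^\epsilon_s]\|_{L^{m/2}}\lesssim c_{m,\epsilon,\gamma_0}\,|t-s|^{\gamma_1+\gamma_0/2}$ with $\gamma_1+\gamma_0/2>1$ that feeds the buckling $c^m\lesssim 1+c^{m/2}$; the truncation at $4/3$ is precisely what forces this exponent above $1$. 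Second, the paper does \emph{not} conclude via an abstract martingale representation. Since $\sigma$ is only in $L^p$, the pointwise integrand $\sigma(\bar X_s-w^H_s)$ is undefined, so one cannot simply factor $d\langle M\rangle_t=\sigma\sigma^*\,dt$ and invoke the usual representation theorem. Instead the paper carries the pair $(X^\epsilon,B^\epsilon)$ through Skorokhod, passes to the limit also in the cross-variations $\langle M^j,\bar B^i\rangle_t=(\mathcal I a^{ij})_t$, defines $\int_0^t\sigma(\bar X_s-w^H_s)\,d\bar B_s$ as the $L^2$-limit of its mollifications (Lemma~\ref{extending the integral}), and then shows $\bar{\mathbb E}\,\langle M_t-\int_0^t\sigma(\bar X_s-w^H_s)\,d\bar B_s,\,e_j\rangle^2=0$ by expanding the square into three sewings. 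The mixed term in that expansion (Lemma~\ref{identificationII}) is where the sharper time regularity of Theorem~\ref{regularity of averaging operator I} is genuinely needed, and is the reason the full-measure set $\Gamma(\sigma)$ in the statement depends on $\sigma$.
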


\begin{rem}
Our result relies on the regularizing of the square of the Hilbert-Schmidt norm $\|\sigma\|_{HS}^{2}$. Thus, the same proof applies also to an infinite dimensional cylindrical Wiener process $B$ under an appropriate Hilbert-Schmidt assumption on $\sigma$, while keeping the dimension $d$ of the solution and the fractional Brownian motion finite.
\end{rem}

 Let us illustrate the above result with an example.

\begin{example}
Let $n=d$ and consider for $\gamma>0$ and $K>0$
\[
\sigma(x)=\mbox{Id}\frac{1}{|x|^\gamma}1_{|x|\leq K}.
\]
Note that then $\sigma\in L^p_{x}$ provided $\gamma<d/p$. The condition $d/p<1$ appearing above means that we need to impose $\gamma<1$. In order to ensure that we may choose $p\geq 2$, we need to demand further $2<d/\gamma$, which represents a constraint only in the case $d=1$. Overall, we obtain that for
\[
2H<(1+\frac{d}{(d/(2\gamma))\wedge 4/3})^{-1}
\]
the SDE
\[
X_t=x_0+\int_0^t\left(\frac{1}{|X_s-w^H_s|^\gamma}1_{|X_s-w^H_s|\leq K}\right)dB_s
\]
admits a weak solution.
\end{example}
\begin{rem}[Comparison with results in \cite{galeati2020regularizationn}]
Let $H'\in (1/2, 1)$ and let $\beta^{H'}$ be a fractional Brownian motion on $(\Omega', \mathcal{F}', \mathbb{P}', (\mathcal{F}'_t)_t)$.  Note that for $\sigma\in L^p_x$, combining \cite[Theorem 1]{galeati2020regularizationn} with Theorem \ref{regularity of averaging operator I} below, one obtains that provided 
\[
H<\frac{H'-1/2}{2+d/p},
\]
the problem
\[
X_t=x_0+\int_0^t \sigma(X_s-w^H_s)d\beta^{H'}_s
\]
admits a unique solution in the sense that for any $x_0\in \R^d$ any two pathwise solutions solutions defined on $(\Omega', \mathcal{F}', \mathbb{P}')$ are indistinguishable. There holds moreover path-by-path uniqueness (refer to \cite[Definition 40]{galeati2020regularizationn}).
\label{comparison}
\end{rem}

\subsection*{Notation}
We endow the space of $d\times n$ matrices $\mathbb{M}^{d\times n}$ with the Hilbert-Schmidt norm
\[
\norm{A}_{HS}:=\sqrt{\mbox{Tr}(A^* A)}.
\]
For a function $\sigma:\R^d\to \mathbb{M}^{d\times n}$, we denote by $L^p_x$ the Bochner space equipped with the norm
\[
\norm{\sigma}_{L^p_x}=\left( \int_{\R^d}\norm{\sigma(x)}_{HS}^pdx\right)^{1/p}.
\]

\section{Tightness}
For the readers convenience we begin by citing two results on the regularity of averaging operators associated with fractional Brownian motion $T^{-w^H}f$. As we assume throughout $H<1/d$, $\mathbb{P}^H$-almost any realization of the fractional Brownian motion admits a local time $L$ allowing to write 
\[
T^{-w^H}f: (t, x)\to \int_0^t f(x-w^H_s)ds=(f*L_t)(x).
\]
The results cited represent a slight adaptation to our purposes. In particular, Theorem~\ref{regularity of averaging operator I} provides a more quantitative statement on the Hölder-continuity of the averaging operator than provided in the reference, which is however immediate from the proof of \cite[Theorem 4]{galeati2020noiseless}. 


\begin{theorem}[Regularity of averaging operators I, { \cite[Theorem 3.4]{harang2020cinfinity}}]
Let $w^H$ be $d$-dimensional fractional Brownian motion of Hurst parameter $H$ on $(\Omega^H, \mathcal{F}^H, \mathbb{P}^H)$ such that $H<1/d$ and let $p \in [1, \infty)$. Then there exists a $\mathcal{F}^H$ measurable set $\Gamma\subset \Omega^H$ such that $\mathbb{P}^H(\Gamma)=1$ and such that for any $\omega^H\in\Gamma$, $w^H(\omega^H)$ admits a local time $L$ and for any \[
\lambda<1/(2H)-d/(p\wedge 2), \qquad \gamma<1-(\lambda+d/2)H\] we have $T^{-w^H}f\in C^\gamma_t C^{\lambda}_x$  provided $\lambda\in \R^+\backslash \mathbb{N}$ and $f\in L^p_x$. Moreover we have the stability property
\begin{equation}
    \norm{T^{-w^H}(f_1-f_2)}_{C^\gamma_t C^{\lambda}_x}\lesssim \norm{f_1-f_2}_{L^p_x} .
    \label{stability}
\end{equation}
\label{regularity of averaging operator}
\end{theorem}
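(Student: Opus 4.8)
The plan is to reduce the entire statement to a single, $f$-independent regularity property of the local time, and to let a deterministic convolution inequality transport that regularity to $T^{-w^H}f$ for every $f\in L^p_x$ at once. Since $H<1/d$, $\mathbb{P}^H$-almost every path of $w^H$ admits a jointly measurable local time $L$, and the occupation times formula gives, for each such path,
\[
(T^{-w^H}f)(t,x)=\int_0^t f(x-w^H_s)\dd s=(f*L_t)(x),\qquad (T^{-w^H}f)(t,x)-(T^{-w^H}f)(r,x)=(f*L_{r,t})(x),
\]
with $L_{r,t}:=L_t-L_r$. Hence the spatial regularity of $T^{-w^H}f$, and through $L_{r,t}$ also its temporal regularity, are controlled by convolution in $x$ with the (increments of the) local time.

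The deterministic mechanism is Young's convolution inequality in Besov spaces \cite{kuhn2021convolution}: writing $p'$ for the conjugate exponent of $p$, if $f\in B^0_{p,\infty}$ and $g\in B^\lambda_{p',\infty}$ then $f*g\in B^\lambda_{\infty,\infty}$ with $\norm{f*g}_{B^\lambda_{\infty,\infty}}\lesssim \norm{f}_{B^0_{p,\infty}}\norm{g}_{B^\lambda_{p',\infty}}$, and for $\lambda\in\R^+\backslash\mathbb{N}$ one has $B^\lambda_{\infty,\infty}=C^\lambda_x$. Using $L^p_x\hookrightarrow B^0_{p,\infty}$, the theorem follows once I establish, on one fixed full-measure set $\Gamma$, that the local time satisfies $L\in C^\gamma_t B^\lambda_{p',\infty}$ uniformly on $[0,T]$ for all admissible $\lambda,\gamma$. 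Since this local-time statement does not involve $f$, the same $\Gamma$ serves every $f\in L^p_x$ simultaneously, and the stability bound \eqref{stability} is then nothing but the convolution inequality applied to $f_1-f_2$.

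It remains to prove the regularity of $L$, which I would do on the Fourier side. For a Littlewood--Paley block $\Delta_j$ with symbol $\rho(2^{-j}\cdot)$ supported on $|\xi|\sim 2^j$, Plancherel and the fractional-Brownian characteristic function $\mathbb{E}\,e^{i\xi\cdot(w^H_u-w^H_v)}=e^{-\frac12|\xi|^2|u-v|^{2H}}$ give
\[
\mathbb{E}\,\norm{\Delta_j L_{r,t}}_{L^2_x}^2=\int_{\R^d}\rho(2^{-j}\xi)^2\int_r^t\int_r^t e^{-\frac12|\xi|^2|u-v|^{2H}}\dd u\,\dd v\,\dd\xi .
\]
The inner time integral is comparable to $|t-r|\min(|t-r|,|\xi|^{-1/H})\leq |t-r|^{2-\theta}|\xi|^{-\theta/H}$ for any $\theta\in[0,1]$, so that $\mathbb{E}\norm{\Delta_j L_{r,t}}_{L^2_x}^2\lesssim |t-r|^{2-\theta}2^{j(d-\theta/H)}$. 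Choosing $\theta$ minimal subject to $d-\theta/H\leq-2\lambda$, boosting to higher moments via the strong local nondeterminism of $w^H$, and running a Kolmogorov/Garsia--Rodemich--Rumsey argument jointly in $(t,x)$, yields $L\in C^\gamma_t B^\lambda_{2,\infty}$ precisely for $\lambda<1/(2H)-d/2$ and $\gamma<1-(\lambda+d/2)H$. Because the local time is compactly supported, the embedding $B^\lambda_{2,\infty}\hookrightarrow B^\lambda_{p',\infty}$ is free when $p'\leq 2$, i.e. $p\geq2$, which settles that regime with $d/(p\wedge2)=d/2$; for $p<2$ one repeats the computation with $L^{p'}$-moments (equivalently, tracks the Sobolev-type loss) to reach the threshold $1/(2H)-d/p$.

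The main obstacle is exactly this quantitative local-time estimate: organizing the simplex integration of the local-nondeterminism bounds so that the space-frequency/time trade-off produces the sharp pair $(\lambda,\gamma)$ rather than a strictly smaller range, and carrying out the joint space--time continuity argument with the correct scaling. A secondary technical point is to fix the exceptional null set independently of the parameters, which I would do by exhausting the open admissible ranges of $(\lambda,\gamma)$ along a countable dense family and invoking monotonicity of the Besov scale.
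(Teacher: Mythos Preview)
The paper does not provide its own proof of this theorem; it is quoted verbatim from \cite[Theorem 3.4]{harang2020cinfinity} and used as a black box. Your proposal is precisely the Harang--Perkowski strategy the paper sketches in the introduction: write $T^{-w^H}f=f*L_t$ via the occupation times formula, push all the work onto a pathwise regularity statement for the local time $L$ alone, and then let Young's convolution inequality in Besov spaces \cite{kuhn2021convolution} deliver the bound on $T^{-w^H}f$ together with the $f$-independent full-measure set and the stability estimate \eqref{stability}. Your Fourier/Littlewood--Paley computation for $\mathbb{E}\norm{\Delta_j L_{r,t}}_{L^2_x}^2$ and the interpolation in $\theta$ reproduce exactly the thresholds $\lambda<1/(2H)-d/2$ and $\gamma<1-(\lambda+d/2)H$, and the passage to $p<2$ via the Besov embedding loss $d(1/p-1/2)$ accounts correctly for the $d/(p\wedge 2)$ in the statement. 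In short, there is nothing to compare against in the paper, and your outline is faithful to the cited source.
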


\begin{theorem}[Regularity of averaging operators II, { \cite[Theorem 4]{galeati2020noiseless}}]
Let $w^H$ be $d$-dimensional fractional Brownian motion of Hurst parameter $H$ on $(\Omega^H, \mathcal{F}^H, \mathbb{P}^H)$. For $p \in [1, \infty)$ let $f\in L^{p}_x$. Then there exists a $\mathcal{F}^H$ measurable set $\Gamma(f)\subset \Omega^H$ such that $\mathbb{P}^H(\Gamma(f))=1$ and such that for any $\omega^H\in\Gamma(f)$ and
 \[
\lambda<1/(2H)-d/p, \qquad\gamma<1-(\lambda+d/p)H\]
we have $T^{-w^H}f\in C^\gamma_t C^{\lambda}_x$  provided $\lambda\in \R^+\backslash \mathbb{N}$. Moreover we have the stability property
\begin{equation}
    \mathbb{E}^H\left[\norm{T^{-w^H}(f_1-f_2)}_{C^\gamma_t C^{\lambda}_x}^{2k}\right]\lesssim \norm{f_1-f_2}_{L^p_x}^{2k} .
    \label{stabilityII}
\end{equation}
\label{regularity of averaging operator I}
\end{theorem}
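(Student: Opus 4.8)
The plan is to establish the moment bounds by means of the stochastic sewing lemma applied frequency by frequency in the Fourier variable, and then to assemble the single-mode estimates through a Littlewood--Paley decomposition, thereby converting the $L^p_x$ hypothesis into the claimed joint space-time regularity. The starting observation is that for a single Fourier mode $e_\xi(x):=e^{i\xi\cdot x}$ one has
\[
(T^{-w^H}e_\xi)(t,x)=e^{i\xi\cdot x}\int_0^t e^{-i\xi\cdot w^H_r}\,dr,
\]
so the spatial dependence is explicit and the whole analysis reduces to an $\xi$-quantified control of the amplitude $\Phi_t(\xi):=\int_0^t e^{-i\xi\cdot w^H_r}\,dr$. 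Since $f\in L^p_x$ is recovered by summing such modes against its Fourier transform, the crux is a sharp bound on the increments of $\Phi$.

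First I would set up the sewing. Fixing $s<t$, consider the germ
\[
A_{s,t}(\xi):=\mathbb{E}^H_s\Big[\int_s^t e^{-i\xi\cdot w^H_r}\,dr\Big]=\int_s^t \mathbb{E}^H_s\big[e^{-i\xi\cdot w^H_r}\big]\,dr,
\]
where $\mathbb{E}^H_s$ denotes conditioning on the history up to time $s$. As $w^H_r$ is Gaussian, the conditional characteristic function factorises as $\mathbb{E}^H_s[e^{-i\xi\cdot w^H_r}]=\exp(-i\xi\cdot \mathbb{E}^H_s[w^H_r])\exp(-\tfrac12|\xi|^2 v_{s,r})$, with the phase $\mathcal{F}^H_s$-measurable (a genuine subtlety for fractional Brownian motion, where the conditional mean is not $w^H_s$) and conditional variance $v_{s,r}=\operatorname{Var}(w^H_r\mid \mathcal{F}^H_s)$. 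The decisive input is the strong local nondeterminism of fractional Brownian motion, which gives $v_{s,r}\gtrsim |r-s|^{2H}$; substituting yields the pointwise decay
\[
|A_{s,t}(\xi)|\lesssim \int_s^t e^{-c|\xi|^2|r-s|^{2H}}\,dr\lesssim |t-s|\wedge |\xi|^{-1/H},
\]
which is the mechanism producing regularisation. The stochastic sewing lemma then controls $\mathbb{E}^H[|\Phi_t(\xi)-\Phi_s(\xi)|^{2k}]^{1/2k}$ from this first-moment bound together with a bound on $\mathbb{E}^H[|\mathbb{E}^H_s\,\delta A_{s,u,t}(\xi)|^{2k}]^{1/2k}$ for the discrete increment $\delta A_{s,u,t}=A_{s,t}-A_{s,u}-A_{u,t}$, the latter again extracted from conditional Gaussian computations across the three scales $s<u<t$ and local nondeterminism.

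Next I would assemble the frequencies. Writing $f=\sum_j \Delta_j f$ with $\Delta_j f$ localised at $|\xi|\sim 2^j$, the linearity of $T^{-w^H}$ and the amplitude decay $|\xi|^{-1/H}$ translate into a gain of roughly $1/(2H)$ spatial derivatives at the level of $2k$-th moments, the factor $1/2$ reflecting that sewing controls fluctuations rather than the mean. The loss $-d/p$ reflects the cost of converting the $L^p_x$ norm of $f$ into the frequency-localised, $\ell^2$-based Gaussian estimates natural for the moments, and balancing these produces $\lambda<1/(2H)-d/p$. The joint time regularity follows by interpolating the two bounds $|A_{s,t}(\xi)|\lesssim |t-s|$ and $\lesssim |\xi|^{-1/H}$: spending $(\lambda+d/p)H$ of the available temporal smoothness to buy $\lambda$ spatial derivatives (each derivative costing $H$ through the fractional scaling) leaves $\gamma<1-(\lambda+d/p)H$. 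A Besov-space Kolmogorov--Chentsov argument upgrades the uniform-in-$k$ moment bounds to the almost sure statement $T^{-w^H}f\in C^\gamma_t C^\lambda_x$ on a full-measure set $\Gamma(f)$, the dependence on $f$ entering only through the null set where the estimates fail.

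Finally, the stability estimate \eqref{stabilityII} is immediate: every bound above is linear and homogeneous in $f$, so rerunning the argument with $f$ replaced by $f_1-f_2$ gives $\mathbb{E}^H[\norm{T^{-w^H}(f_1-f_2)}_{C^\gamma_t C^\lambda_x}^{2k}]\lesssim \norm{f_1-f_2}_{L^p_x}^{2k}$. I expect the main obstacle to be the fluctuation bound on $\mathbb{E}^H_s\,\delta A_{s,u,t}$ demanded by the stochastic sewing lemma, since it is there that the conditional Gaussian law over the three time scales must be combined sharply with local nondeterminism, and there that the precise power of $H$ in $\gamma$ is fixed; tracking the interplay between the integrability index $p$, the cost of the $L^p\to\ell^2$ conversion, and the sewing exponent is the delicate bookkeeping that renders the exponents sharp.
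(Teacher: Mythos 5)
Your proposal addresses a statement that the paper itself does not prove: the paper's own treatment consists of quoting \cite[Theorem 4]{galeati2020noiseless} and, in the subsequent remark, translating parameters --- apply the cited theorem with $s=0$, $q=\infty$ and regularity gain $\rho=\lambda+m$ in the Bessel scale, then use the Sobolev embedding $L^{\lambda+m,p}_x\subset C^\lambda_x$ with $m=d/p+\varepsilon$, so that the loss $-d/p$ in $\lambda<1/(2H)-d/p$ is pure embedding. Your sketch instead reproves the result from scratch, and it correctly reconstructs the mechanism of the cited proof: stochastic sewing applied to the conditional-expectation germ, the conditional Gaussian law with $\mathcal{F}^H_s$-measurable mean, strong local nondeterminism $v_{s,r}\gtrsim|r-s|^{2H}$, moment bounds upgraded by a Kolmogorov argument to an almost sure statement on an $f$-dependent full-measure set, and stability \eqref{stabilityII} by linearity. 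The organisational difference is real: Galeati--Gubinelli work directly in function-space norms, exploiting the Gaussian semigroup bound $\|P_v f\|_{L^{\rho,p}_x}\lesssim v^{-\rho/2}\|f\|_{L^p_x}$ with $v\sim|r-s|^{2H}$, so the gain $\rho<1/(2H)$ comes out in one stroke and the H\"older statement is a one-line embedding afterwards; you localise in frequency and interpolate the amplitude bound $|A_{s,t}(\xi)|\lesssim|t-s|\wedge|\xi|^{-1/H}$, which reproduces the same exponents (the sewing threshold forces the time exponent $\theta>1/2$, capping the spatial gain at $1/(2H)$, and spending $(\lambda+d/p)H$ of time regularity yields exactly $\gamma<1-(\lambda+d/p)H$). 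Your route buys a transparent account of where each exponent originates; the semigroup route avoids the Littlewood--Paley bookkeeping.

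Two fixable slips. First, for $p>2$ the Fourier transform $\hat f$ is merely a distribution, so you cannot pair per-mode moment bounds against $\hat f$; the sewing must be run on the block germs $A^j_{s,t}(x)=\mathbb{E}^H_s\int_s^t \Delta_j f(x-w^H_r)\,dr$, where the conditional expectation acts as a Gaussian Fourier multiplier of modulus $e^{-\frac12 v_{s,r}|\xi|^2}$, giving the decay $e^{-c|r-s|^{2H}2^{2j}}$ uniformly on the block, and the $-d/p$ then enters cleanly through Bernstein's inequality $\|\Delta_j f\|_{L^\infty_x}\lesssim 2^{jd/p}\|f\|_{L^p_x}$ --- this is the precise version of your vague ``$L^p\to\ell^2$ conversion''. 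Second, for your germ the tower property gives $\mathbb{E}^H_s[\delta A_{s,u,t}]=0$ identically, since $A_{s,t}-A_{s,u}=\mathbb{E}^H_s\int_u^t e^{-i\xi\cdot w^H_r}\,dr=\mathbb{E}^H_s[A_{u,t}]$; hence the quantity you repeatedly single out as the main obstacle vanishes, and the binding sewing condition is the unconditional bound $\|\delta A_{s,u,t}\|_{L^{2k}(\Omega^H)}\lesssim|t-s|^{1/2+\epsilon}$. It is this $1/2$ threshold that caps the gain at $1/(2H)$ rather than $1/H$, so your heuristic for the factor $1/2$ is correct but attributed to the wrong sewing condition.
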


\begin{rem}
Since we use a similar argumentation throughout the paper to get various bounds on the parameters, let us outline how the conditions on parameters in Theorem~\ref{stabilityII} are obtained from \cite[Theorem 4]{galeati2020noiseless}.
We apply  \cite[Theorem 4]{galeati2020noiseless} with $s=0$, $q=\infty$ and $\rho = \lambda + m$ and use the Sobolev embeddding for Bessel potential spaces
$L_{x}^{\lambda+ m, p} \subset C_{x}^{\lambda}$ which requires $m = d / p+\varepsilon$ for some $\varepsilon>0$.
Then the condition in \cite[Theorem 4]{galeati2020noiseless} reads
\[ H (\lambda + m) < 1 / 2 \]
hence
\begin{equation}\label{eq:1}
 \lambda < 1 / (2 H) - d/p-\varepsilon.
 \end{equation}
Therefore, whenever $\lambda$ satisfies the condition in Theorem~\ref{stabilityII}, there is an $\varepsilon$ so that the condition \eqref{eq:1} is satisfied for some $\varepsilon$.
\end{rem}

\begin{rem}
Note that Theorem \ref{regularity of averaging operator} is weaker than Theorem \ref{regularity of averaging operator I} in terms of the regularization effect. However, it allows for the pathwise stability property \eqref{stability}, compared to the stability result \eqref{stabilityII} that employs moments and thus implicitly necessitates some measurability in $\omega^H$. In particular, the zero-set outside of which Theorem \ref{regularity of averaging operator} holds is indepent of the considered function $f$, whereas in Theorem \ref{regularity of averaging operator I}, it depends on $f$. As we will lose measurability in $\omega^H$ by going through a tightness argument, we resort to Theorem \ref{regularity of averaging operator} for the most part of our analysis. 

\label{comparaison averaging local time}
\end{rem}
Throughout the remainder of the paper, let us  fix some realization $w^H(\omega^H)$ of a fractional Brownian motion on $(\Omega^H, \mathcal{F}^H, \mathbb{P}^H)$ that permits to employ the regularity statements of Theorems \ref{regularity of averaging operator} and \ref{regularity of averaging operator I} for an accordingly prescribed Hurst parameter. More precisely, by Theorems \ref{regularity of averaging operator} and \ref{regularity of averaging operator I}, we find a $\Gamma(\sigma)\subset \Omega^H$ such that $\mathbb{P}^H(\Gamma(\sigma))=1$ and such that for any $\omega^H\in \Gamma(\sigma)$ the averaging operator enjoys the cited regularity.

\begin{lemma}[A priori bounds for solutions to mollified problem]
\label{a priori bound}
Let $p\in [2, \infty)$ and let  $\sigma:\R^d\to \mathbb{M}^{d\times n}$ satisfy  $\sigma\in L^p_x$. Let $\sigma_\epsilon$ be a  mollification of some smooth cut-off of $\sigma$, i.e. for a sequence of mollifiers $(\rho^\epsilon)_\epsilon$ and for $\varphi^\epsilon\in C^\infty_0$ with $\mbox{supp}(\varphi^\epsilon)\subset B_0(1/\epsilon)$, set $\sigma_\epsilon=\rho^\epsilon*(\varphi^\epsilon\cdot \sigma)$. Let $B^\epsilon$ be an $(\Omega^\epsilon, \mathcal{F}^\epsilon, \mathbb{P}^\epsilon, (\mathcal{F}^\epsilon_t)_t)$ Brownian motion. Suppose that $2H<(1+\frac{d}{(p/2)\wedge( 4/3)})^{-1}$. Let $X^\epsilon$ be the unique strong solution to the problem
\[
X^\epsilon_t=x_0+\int_0^t \sigma_\epsilon(X^\epsilon_r-w^H_r(\omega^H))dB^\epsilon_r.
\]
Then for any $m\geq 4$ and $\gamma_0<1-Hd/2$ we have 
\[
\sup_{s\neq t\in [0,T]}\frac{\mathbb{E}|X^\epsilon_{s,t}|^m}{|t-s|^{m\gamma_0/2}}<\infty
\]
and
\[
\mathbb{E}[\sup_{t\in [0,T]} |X^\epsilon_{t}|^m]<\infty
\]
uniformly in $\epsilon>0$. 
\end{lemma}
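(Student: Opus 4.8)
The plan is to establish the two a priori bounds via the Burkholder-Davis-Gundy (BDG) inequality combined with the regularization effect of the averaging operator applied to $\|\sigma_\epsilon\|_{HS}^2$. The key observation is that since $X^\epsilon$ solves a driftless SDE with diffusion coefficient $\sigma_\epsilon(\cdot - w^H_r)$, the quadratic variation of the increment $X^\epsilon_{s,t} = X^\epsilon_t - X^\epsilon_s$ is governed by $\int_s^t \|\sigma_\epsilon(X^\epsilon_r - w^H_r)\|_{HS}^2 \, dr$. By BDG, for $m \geq 4$ we have $\mathbb{E}|X^\epsilon_{s,t}|^m \lesssim \mathbb{E}\left[\left(\int_s^t \|\sigma_\epsilon(X^\epsilon_r - w^H_r)\|_{HS}^2 \, dr\right)^{m/2}\right]$. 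The crucial step is to \emph{not} bound the time integral trivially by $|t-s| \sup \|\sigma_\epsilon\|_{HS}^2$, which would lose the regularization, but instead to recognize $\int_s^t \|\sigma_\epsilon(x - w^H_r)\|_{HS}^2 \, dr = (T^{-w^H}_{s,t}\|\sigma_\epsilon\|_{HS}^2)(x)$ evaluated at $x = X^\epsilon_s$, up to the standard sewing-type control of replacing the moving argument $X^\epsilon_r$ by the frozen $X^\epsilon_s$.

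First I would invoke Theorem~\ref{regularity of averaging operator} applied to the function $f = \|\sigma_\epsilon\|_{HS}^2$. Since $\sigma_\epsilon \in L^p_x$ with the Hilbert-Schmidt norm, we have $\|\sigma_\epsilon\|_{HS}^2 \in L^{p/2}_x$, and moreover the mollification and cutoff ensure $\|\,\|\sigma_\epsilon\|_{HS}^2\|_{L^{p/2}_x} \lesssim \|\sigma\|_{L^p_x}^2$ uniformly in $\epsilon$. Applying the theorem with integrability exponent $p/2$ yields $T^{-w^H}\|\sigma_\epsilon\|_{HS}^2 \in C^{\gamma}_t C^{\lambda}_x$ for admissible $\lambda < 1/(2H) - d/((p/2)\wedge 2)$ and $\gamma < 1 - (\lambda + d/2)H$. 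In particular, by the time-Hölder regularity, $|(T^{-w^H}_{s,t}\|\sigma_\epsilon\|_{HS}^2)(x)| \lesssim |t-s|^\gamma$ uniformly in $x$ (using boundedness of the spatial norm together with an $L^\infty_x$ control coming from $\lambda > 0$), and crucially uniformly in $\epsilon$ by the stability estimate. The Hurst condition $2H < (1 + d/((p/2)\wedge(4/3)))^{-1}$ is precisely what guarantees that the admissible time regularity $\gamma$ can be taken large enough — in fact $\gamma > \gamma_0/2$ related — so that $|t-s|^{m\gamma/2}$ dominates $|t-s|^{m\gamma_0/2}$ with $\gamma_0 < 1 - Hd/2$.

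The main obstacle will be the freezing step: controlling the discrepancy between $\int_s^t \|\sigma_\epsilon(X^\epsilon_r - w^H_r)\|_{HS}^2 \, dr$ (moving argument) and $(T^{-w^H}_{s,t}\|\sigma_\epsilon\|_{HS}^2)(X^\epsilon_s)$ (frozen argument). This is handled by the spatial Hölder regularity $C^\lambda_x$ of the averaging operator together with an a priori modulus of continuity of $X^\epsilon$ itself, leading to a self-referential Gr\"onwall-type or buckling argument on the quantity $\mathbb{E}|X^\epsilon_{s,t}|^m / |t-s|^{m\gamma_0/2}$. The intersection $(p/2)\wedge(4/3)$ in the hypothesis reflects a competition between the regularity available from the $L^{p/2}$ integrability of $\|\sigma_\epsilon\|_{HS}^2$ and a structural constraint ($4/3$) arising from closing this buckling at the level of the $m$-th moment with $m \geq 4$, ensuring the exponent on $|t-s|$ on the right-hand side strictly exceeds that needed to absorb back into the left. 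Once the first (Kolmogorov-type) increment bound is established uniformly in $\epsilon$, the second bound $\mathbb{E}[\sup_{t} |X^\epsilon_t|^m] < \infty$ follows from BDG applied on the whole interval $[0,T]$ together with the same averaging control, or alternatively by combining the increment bound with the Garsia-Rodemich-Rumsey inequality to pass from increment moments to the supremum.
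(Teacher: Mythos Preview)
Your proposal is correct and follows essentially the same route as the paper: BDG, the averaging-operator regularity of $\|\sigma_\epsilon\|_{HS}^2\in L^{p/2}_x$ via Theorem~\ref{regularity of averaging operator}, a freezing/sewing step, and a buckling inequality $c_{m,\epsilon,\gamma_0}^m\lesssim 1+c_{m,\epsilon,\gamma_0}^{m/2}$ on $c_{m,\epsilon,\gamma_0}:=\sup_{s\neq t}(\mathbb{E}|X^\epsilon_{s,t}|^m)^{1/m}/|t-s|^{\gamma_0/2}$. Two points to make explicit when you write it up: the paper carries out the freezing step via the \emph{stochastic} sewing lemma (the germ $A^\epsilon_{s,t}=(T^{-w^H}_{s,t}\|\sigma_\epsilon\|_{HS}^2)(X^\epsilon_s)$, with $\|(\delta A^\epsilon)_{s,u,t}\|_{L^{m/2}}$ controlled by $c_{m,\epsilon,\gamma_0}|t-u|^{\gamma_1}|u-s|^{\gamma_0/2}$ and $\gamma_1+\gamma_0/2>1$), and the buckling only closes because $c_{m,\epsilon,\gamma_0}<\infty$ is already known for each fixed $\epsilon$ from the trivial bound $\sigma_\epsilon\in L^\infty_x$.
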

\begin{proof}
Let $m\geq 4$. Note that by the Burkholder-Davis-Gundy inequality, we have that for $\epsilon>0$ fixed
\begin{equation}
\begin{split}
\label{BDG}
\mathbb{E}[|X_{s,t}^\epsilon|^m]&\lesssim \mathbb{E}\left[\left(\int_s^t \norm{\sigma_\epsilon( X^\epsilon_r-w^H_r)}_{HS}^2dr\right)^{m/2}\right]\\
&\lesssim \max_{1\leq i,j\leq d}\norm{\sigma_\epsilon^{ij}}_{L^\infty_{x}}^m |t-s|^{m/2},
\end{split}
\end{equation}
i.e. the constant
\[
c_{m,\epsilon, \gamma_0}^m:=\sup_{s\neq t\in [0,T]}\frac{\mathbb{E}[|X_{s,t}^\epsilon|^m]}{|t-s|^{m\gamma_0/2}}<\infty
\]
is finite for any fixed $\epsilon>0$ and $\gamma_0\leq 1$ but a priori diverging in $\epsilon$. For  $\gamma_0<1-Hd/2$ we now use the stochastic Sewing Lemma \cite{stochasticsewing}(see also Lemma~\ref{stoch sewing a priori}) to give an alternative definition to the above expression that allows to show that $c_{m,\epsilon, \gamma_0}$ is uniformly bounded in $\epsilon>0$. Define the germ
\[
A_{s,t}^\epsilon:=(\norm{\sigma_\epsilon}_{HS}^2*L_{s,t})(X^\epsilon_s)=(T^{-w^H}_{s,t}\norm{\sigma_\epsilon}_{HS}^2)(X^\epsilon_s),
\]
where by $\norm{\sigma_\epsilon}_{HS}^2$ we understand the mapping $x\to \norm{\sigma_\epsilon(x)}_{HS}^2$, which by assumption is in $L^{p/2}_x$ uniformly in $\epsilon$.  We note that by Theorem \ref{regularity of averaging operator} and the condition imposed on the Hurst parameter $H$, we have for some small $\delta>0$ and $\gamma_0, \gamma_1\in (1/2, 1)$ such that $\gamma_0/2+\gamma_1>1$ and 
 \begin{equation*}
     \norm{(T^{-w^H}_{s,t}\norm{\sigma_\epsilon}_{HS}^2)}_{C^\delta_x}\lesssim |t-s|^{\gamma_0}, \qquad \norm{(T^{-w^H}_{s,t}\norm{\sigma_\epsilon}_{HS}^2)}_{C^{1+\delta}_x}\lesssim |t-s|^{\gamma_1}
 \end{equation*}
 uniformly in $\epsilon>0$. Indeed, remark that the condition $2H< (1+\frac{d}{(p/2)\wedge 2})^{-1}$ ensures $A^\epsilon_t\in C^{1+\delta}$ whereas $2H< (1+\frac{d}{4/3})^{-1}$ ensures that $\gamma_0, \gamma_1$ may be chosen such that $\gamma_0/2+\gamma_1>1$. The maximal $\gamma_0$ we may choose for such $H$ is precisely the one we give in the condition. Using the notation of the sewing Lemma introduced in the Appendix, we thus have 
\begin{align*}
    |A^\epsilon_{s,t}|\leq \norm{(T_{s,t}^{-w^H}\norm{\sigma_\epsilon}_{HS}^2)}_{L^\infty_x}
    \lesssim|t-s|^{\gamma_0},
\end{align*}
as well as 
\begin{align*}
\begin{aligned}
    |(\delta A^\epsilon)_{s,u, t}|&=|(T_{u,t}^{-w^H}\norm{\sigma_\epsilon}_{HS}^2)(X^\epsilon_s)-(T_{u,t}^{-w^H}\norm{\sigma_\epsilon}_{HS}^2)(X^\epsilon_u)|
   \\& \lesssim \norm{(T_{u,t}^{-w^H}\norm{\sigma_\epsilon}_{HS}^2)}_{C^1_x}|X_u^\epsilon-X_s^\epsilon|.
    \end{aligned}
\end{align*}
 We have by Jensens' inequality that 
\begin{align*}
    \norm{\mathbb{E}[(\delta A^\epsilon)_{s,u,t} | \mathcal{F}^\epsilon_s]}_{L^{m/2}(\Omega^\epsilon)}&\lesssim (\mathbb{E}[|(\delta A^\epsilon)_{s,u,t}|^{m/2}])^{2/m}\\
   &\lesssim (\mathbb{E}[|(\delta A^\epsilon)_{s,u,t}|^{m}])^{1/m}\\
   &\lesssim (\mathbb{E}[|((t-u)^{\gamma_1}|X^\epsilon_u-X^\epsilon_s|)|^{m}])^{1/m}\\
    &\lesssim |t-u|^{\gamma_{1}} \mathbb{E}[|X_u^\epsilon-X_s^\epsilon|^{m}]^{1/m}\\
    &\lesssim c_{m,\epsilon, \gamma_0}  |t-u|^{\gamma_{1}} |u-s|^{\gamma_0/2}
\end{align*}
As by our assumption $\gamma_{1}+\gamma_0/2>1$, the above shows that $A^\epsilon$ admits a stochastic sewing $\mathcal{A}^\epsilon$. The stochastic sewing Lemma \cite{stochasticsewing} furthermore implies that 
\[
\norm{\mathcal{A}^\epsilon_{s,t}}_{L^{m/2}(\Omega^\epsilon)}\lesssim \norm{A^\epsilon_{s,t}}_{L^{m/2}(\Omega^\epsilon)}+c_{m,\epsilon, \gamma_0}|t-s|^{\gamma_{1}+\gamma_0/2} \lesssim |t-s|^{\gamma_0}(1+c_{m,\epsilon, \gamma_0} T^{\gamma_{1}-\gamma_0/2}).
\]
Moreover, due to the regularity of $\sigma_\epsilon$, we may identify the sewing with the Lebesgue integral in \eqref{BDG} in the sense that
\begin{align*}
    \norm{\mathcal{A}^\epsilon_{s,t}}_{L^{m/2}(\Omega^\epsilon)}^{m/2}=\mathbb{E}\left[\left(\int_s^t \norm{\sigma_\epsilon( X^\epsilon_r-w^H_r)}_{HS}^2dr\right)^{m/2}\right].
\end{align*}
We refer to Lemma \ref{identification} the Appendix for a proof of this equality. Overall, going back to \eqref{BDG}, this yields
\begin{align*}
    \mathbb{E}[|X^\epsilon_{s,t}|^m]\lesssim  \norm{\mathcal{A}^\epsilon_{s,t}}_{L^{m/2}(\Omega^\epsilon)}^{m/2}\lesssim |t-s|^{m\gamma_0/2}(1+c_{m,\epsilon, \gamma_0}^{m/2})
\end{align*}
and therefore
\[
c_{m,\epsilon, \gamma_0}^m\lesssim 1+c_{m,\epsilon, \gamma_0}^{m/2}
\]
yielding that indeed $c_{m,\epsilon, \gamma_0}$ is uniformly bounded in $\epsilon$. 
\end{proof}

Note that by the Kolmogorov continuity theorem, we obtain the following.

\begin{cor}[Tightness]
\label{tightness}
We have for any $\epsilon>0$ that $X^\epsilon\in C^{\gamma_0/2}([0,T], \R^d)$ and moreover, for any $m\geq 1$ we have
\begin{equation}
\label{a priori for vitali}
\mathbb{E}\left[\sup_{t\neq s\in[0,T]}\left|\frac{X^\epsilon_{s,t}}{|t-s|^{\gamma_0/2}}\right|^m\right]<\infty
\end{equation}
uniformly in $\epsilon>0$. Hence the laws of and $(X^\epsilon, B^\epsilon)_\epsilon$ are tight\footnote{Remark that strictly speaking, we loose an (arbitrarily small) amount of regularity in applying the Kolmogorow continuity theorem and the compact embedding $C^{\gamma_0/2}\hookrightarrow^c C^{\gamma_0/2-\delta}$ for $\delta>0$. However, as $\gamma_0$ needs to satisfy a strict upper bound, we can incorporate these small losses of regularity into the strict inequality satisfied by $\gamma_0$.} on $C^{\gamma_0/2}([0,T], \R^d)\times C^{\gamma_0/2}([0,T], \R^n)$.
\end{cor}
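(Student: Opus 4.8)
The plan is to upgrade the pointwise moment estimate of Lemma \ref{a priori bound} to a uniform bound on the full Hölder seminorm by means of the Garsia--Rodemich--Rumsey (GRR) inequality, and then to read off tightness via Markov's inequality together with the compact Hölder embedding.

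First I would recall that each $X^\epsilon$ is genuinely path-continuous, being the strong solution of an SDE with smooth coefficients, so no modification is required and the only task is to quantify its Hölder regularity. Applying the GRR inequality with $\Psi(u)=u^m$ and weight $p(u)=u^{\gamma_0/2+\theta}$ yields the pathwise estimate
\[
[X^\epsilon]_{C^{\kappa}}^m\lesssim \int_0^T\int_0^T\frac{|X^\epsilon_{s,t}|^m}{|t-s|^{(\gamma_0/2+\theta)m}}\dd s\dd t, \qquad \kappa=\gamma_0/2+\theta-\tfrac{2}{m}.
\]
Taking expectations, inserting the uniform bound $\mathbb{E}|X^\epsilon_{s,t}|^m\lesssim |t-s|^{m\gamma_0/2}$ of Lemma \ref{a priori bound}, and choosing $\theta<1/m$ makes the resulting double integral $\int_0^T\int_0^T|t-s|^{-m\theta}\dd s\dd t$ finite, so that $\mathbb{E}[[X^\epsilon]_{C^\kappa}^m]<\infty$ uniformly in $\epsilon$ for every $\kappa<\gamma_0/2-1/m$. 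Letting $m\to\infty$ the exponent $\kappa$ approaches $\gamma_0/2$; absorbing this arbitrarily small loss into the strict inequality $\gamma_0<1-Hd/2$ (as flagged in the footnote) gives \eqref{a priori for vitali} for all $m\geq 4$, while the range $1\leq m<4$ follows by Jensen's inequality on the finite measure space. Since the expectation in \eqref{a priori for vitali} is finite, the seminorm is almost surely finite and $X^\epsilon\in C^{\gamma_0/2}([0,T],\R^d)$.

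For the tightness statement I would combine \eqref{a priori for vitali} with the uniform $\sup$-moment bound $\mathbb{E}[\sup_{t}|X^\epsilon_{t}|^m]<\infty$ of Lemma \ref{a priori bound}: together these control the full norm $\norm{X^\epsilon}_{C^{\gamma_0/2}}$ uniformly in $\epsilon$ in $L^m(\Omega^\epsilon)$. By Markov's inequality $\sup_\epsilon \mathbb{P}^\epsilon(\norm{X^\epsilon}_{C^{\gamma_0/2}}>R)\leq C R^{-m}\to 0$ as $R\to\infty$, and since the ball $\{\norm{f}_{C^{\gamma_0/2}}\leq R\}$ is relatively compact in $C^{\gamma_0/2-\delta}$, the family $(X^\epsilon)_\epsilon$ is tight on $C^{\gamma_0/2-\delta}([0,T],\R^d)$, which after absorbing $\delta$ we record as $C^{\gamma_0/2}$. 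The driving processes $B^\epsilon$ all share the Wiener law, which is concentrated on $C^{\gamma_0/2}$ because $\gamma_0/2<1/2$, so $(B^\epsilon)_\epsilon$ is trivially tight; tightness of both marginal families then yields tightness of the joint laws $(X^\epsilon,B^\epsilon)_\epsilon$ on the product space by taking products of the marginal compact sets.

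Since the genuinely delicate estimate has already been carried out in Lemma \ref{a priori bound}, I expect no substantial obstacle here: the only point requiring care is the bookkeeping of the arbitrarily small loss of Hölder regularity incurred in passing from pointwise moments to the seminorm via GRR, and in the subsequent compact embedding, which is handled by exploiting the strict upper bound satisfied by $\gamma_0$ exactly as indicated in the footnote.
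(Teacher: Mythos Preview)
Your proposal is correct and follows essentially the same route as the paper, which simply records the corollary as an immediate consequence of the Kolmogorov continuity theorem applied to the moment bound of Lemma~\ref{a priori bound}. Your use of the Garsia--Rodemich--Rumsey inequality in place of a bare citation of Kolmogorov is merely a more explicit version of the same argument, and your handling of the $\gamma_0$-loss, the extension to small $m$ by Jensen, and the passage to joint tightness via products of marginal compacts are all standard and in line with what the paper intends.
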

By Prokhorov's and  Skorokhod's theorem, we may therefore conclude that :
\begin{cor}[Extraction of a convergent subsequence]
There exists a probabilistic basis $(\bar{\Omega}, \bar{\mathcal{F}}, \bar{\mathbb{P}})$, processes $(\bar{X^\epsilon}, \bar{B^\epsilon})$ on the said basis whose laws coincide with those of $(X^\epsilon, B^\epsilon)$, and a process $(\bar{X}, \bar{B})$  such that 
\[
(\bar{X^\epsilon}, \bar{B^\epsilon})\to (\bar{X}, \bar{B})
\]
$\bar{\mathbb{P}}$-almost surely in $C([0,T], \R^d)\times C([0,T], \R^n)$. Moreover, $\bar{B}$ and $\bar{B}^\epsilon$ are $(\bar{\Omega}, \bar{\mathcal{F}}, \Bar{\mathbb{P}})$-Brownian motions. Denote by $(\bar{\mathcal{F}}_t)_t$ the augmentation of the filtration generated by $(\bar{X}, \bar{B})$. Finally, by weak-*-lower semicontinuity of norms and Fatou's Lemma we have for any $m\geq 1$
\begin{align}
   \bar{\mathbb{E}}[|\bar{X}_{s,t}|^m]\lesssim \liminf_{\epsilon\to 0}   \bar{\mathbb{E}}[|\bar{X}^\epsilon_{s,t}|^m]\lesssim |t-s|^{m\gamma_0/2}
   \label{continuity of limit process}
\end{align}
from which we conclude by Kolmogorov's continuity theorem again that $\bar{X}\in C^{\gamma_0/2}$, $\bar{\mathbb{P}}$-almost surely and that 
\begin{equation}
\label{integralbility hoelder constant}
\bar{\mathbb{E}}\left[\sup_{t\neq s\in[0,T]}\left|\frac{\bar{X}_{s,t}}{|t-s|^{\gamma_0/2}}\right|^m\right]<\infty.
\end{equation}
\end{cor}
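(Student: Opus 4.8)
The plan is to feed the tight family produced in Corollary~\ref{tightness} through the Prokhorov--Skorokhod machinery and then transfer the two features that matter later—the Brownian character of the driving noise and the uniform moment control of the increments—from the approximating sequence to the limit.

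\textbf{Extraction of the common basis.} First I would record that, by Corollary~\ref{tightness}, the laws of $(X^\epsilon, B^\epsilon)_\epsilon$ are tight on $C^{\gamma_0/2}([0,T],\R^d)\times C^{\gamma_0/2}([0,T],\R^n)$, so Prokhorov's theorem furnishes a subsequence (not relabelled) whose laws converge weakly to some probability measure $\mu$. Applying Skorokhod's representation theorem on the Polish space $C([0,T],\R^d)\times C([0,T],\R^n)$ then produces the basis $(\bar\Omega,\bar{\mathcal F},\bar{\mathbb P})$, copies $(\bar X^\epsilon,\bar B^\epsilon)$ with the same laws as $(X^\epsilon,B^\epsilon)$, and a limit $(\bar X,\bar B)$ with law $\mu$, for which $(\bar X^\epsilon,\bar B^\epsilon)\to(\bar X,\bar B)$ holds $\bar{\mathbb P}$-almost surely in the sup-norm product space.

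\textbf{Brownian character.} Since $\bar B^\epsilon$ has the same law as $B^\epsilon$ and the latter is a Brownian motion, each $\bar B^\epsilon$ is a Brownian motion. For the limit I would observe that Wiener measure is stable under passage to the limit: the almost sure convergence of $\bar B^\epsilon$ in $C([0,T],\R^n)$ forces convergence of all finite-dimensional distributions, whose Gaussian laws and covariance structure are retained in the limit, so $\bar B$ carries Wiener measure. With $(\bar{\mathcal F}_t)_t$ defined as the augmentation of the filtration generated by $(\bar X,\bar B)$, adaptedness of $\bar B$ is immediate. The one point requiring care is that the increment $\bar B_t-\bar B_s$ is independent of $\bar{\mathcal F}_s$. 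This holds at each level of the approximation---$B^\epsilon_t-B^\epsilon_s$ is independent of $\sigma(X^\epsilon_r,B^\epsilon_r:r\leq s)$ because $X^\epsilon$ is adapted to the Brownian filtration, and this is a property of the joint law, hence inherited by $(\bar X^\epsilon,\bar B^\epsilon)$---so I would pass to the limit in the conditional characteristic functions $\bar{\mathbb E}\left[\exp(i\xi\cdot(\bar B^\epsilon_t-\bar B^\epsilon_s))\,g\big((\bar X^\epsilon,\bar B^\epsilon)|_{[0,s]}\big)\right]$, using the almost sure convergence together with dominated convergence (the exponentials are bounded) to obtain the required factorization for $\bar B$.

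\textbf{Moment bounds and Kolmogorov.} For the quantitative estimates I would exploit equality of laws: $\bar{\mathbb E}[|\bar X^\epsilon_{s,t}|^m]=\mathbb E[|X^\epsilon_{s,t}|^m]\lesssim|t-s|^{m\gamma_0/2}$ uniformly in $\epsilon$ by Corollary~\ref{tightness}. As $\bar X^\epsilon_{s,t}\to\bar X_{s,t}$ almost surely, Fatou's lemma (equivalently, weak-$*$ lower semicontinuity of the $L^m(\bar\Omega)$ norm) gives $\bar{\mathbb E}[|\bar X_{s,t}|^m]\leq\liminf_{\epsilon}\bar{\mathbb E}[|\bar X^\epsilon_{s,t}|^m]\lesssim|t-s|^{m\gamma_0/2}$, which is exactly \eqref{continuity of limit process}. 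Feeding this increment bound into the quantitative Kolmogorov continuity theorem then yields both $\bar X\in C^{\gamma_0/2}$ almost surely and the integrability \eqref{integralbility hoelder constant} of its Hölder seminorm. The genuinely delicate step is the filtration/martingale argument ensuring that $\bar B$ remains a Brownian motion relative to the limiting filtration; the remaining claims are a routine transfer of properties through equality of laws, almost sure convergence, and Fatou's lemma.
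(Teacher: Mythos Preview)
Your proposal is correct and follows exactly the route the paper indicates: Prokhorov plus Skorokhod for the almost sure representation, equality of laws plus Fatou for the increment moments, and Kolmogorov for the H\"older regularity and the integrability of the seminorm. Your additional care in verifying that $\bar B$ is a Brownian motion with respect to the augmented filtration generated by $(\bar X,\bar B)$ (via passage to the limit in the conditional characteristic functions) goes slightly beyond the literal statement of the corollary but is precisely what is needed in the subsequent martingale identification, so it is a welcome elaboration rather than a deviation.
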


\section{Identification of the limit}
Assuming the conditions in Lemma \ref{a priori bound} hold, we  proceed by a stochastic compactness argument \cite{BreitFeireislHofmanova}, adapted to the present context. Throughout this section, let $(e_j)_{j=1, \dots, d}$ be the canonical basis of $\R^d$ and $(f_i)_{i=1, \ldots, n}$ be the canonical basis of $\R^n$. We know that the processes 
\begin{equation}
    \begin{split}
         t &\to M^{j,\epsilon}_{t}:= \langle  \bar{X}^\epsilon_t, e_{j}\rangle-\langle x_0, e_j\rangle=\langle e_j,  \int_0^t \sigma_\epsilon( \bar{X}^\epsilon_r-w^H_r)d\Bar{B}_r^\epsilon\rangle , \\
    t &\to (M^{j,\epsilon}_{t})^{2}-\int_0^t | \sigma_\epsilon^*( \bar{X}^\epsilon_r-w^H_r) e_j |^2dr,\\
   t &\to M^{j,\epsilon}_t  \langle \bar{B}^\epsilon_t, f_i\rangle -\int_0^t\langle f_i,  \sigma_\epsilon^*( \bar{X}^\epsilon_r-w^H_r)e_j \rangle dr
    \end{split}
    \label{start}
\end{equation}
are martingales with respect to $(\bar{\mathcal{F}}_t)_t$. Note that since $\sigma_\epsilon$ is smooth, we have by Lemma~\ref{identification} that
\begin{align*}
    &\int_0^t | \sigma_\epsilon^*( \bar{X}^\epsilon_r-w^H_r) e_j |^2dr=(\mathcal{I}A^{j,\epsilon})_t,\\
    &\int_0^t\langle f_i,  \sigma_\epsilon^*( \bar{X}^\epsilon_r-w^H_r)e_j \rangle dr=(\mathcal{I}a^{i,j,\epsilon})_t,
\end{align*}
where 
\begin{align*}
    A_{s,t}^{j,\epsilon}&=(|\sigma_\epsilon^* e_j|^2*L_{s,t})(\bar{X}_s^\epsilon)\\
    a^{i,j,\epsilon}_{s,t}&=(\sigma^{ji}_\epsilon*L_{s,t})(\bar{X}_s^\epsilon).
\end{align*}
Note that \eqref{start} being martingales is equivalent to having that for any bounded continuous functional $\phi$ on $C([0,s], \R^d)\times C([0,s], \R^n)$: 
\begin{equation}
    \begin{split}
        \bar{\mathbb{E}}[\phi(\bar{X}^\epsilon|_{[0,s]}, \bar{B}^\epsilon|_{[0,s]}) (M^{j, \epsilon}_t-M^{j, \epsilon}_s)]&=0,\\
        \bar{\mathbb{E}}[\phi(\bar{X}^\epsilon|_{[0,s]}, \bar{B}^\epsilon|_{[0,s]}) ((M^{j, \epsilon}_{t})^2-(M^{j, \epsilon}_{s})^2-(\mathcal{I}A^{j,\epsilon})_{s,t})]&=0,\\
        \bar{\mathbb{E}}[\phi(\bar{X}^\epsilon|_{[0,s]}, \bar{B}^\epsilon|_{[0,s]}) (M^{j,\epsilon}_{t}\langle\bar{B}_{t}^{\epsilon},f_{i}\rangle-M^{j,\epsilon}_{s}\langle\bar{B}_{s}^{\epsilon},f_{i}\rangle-(\mathcal{I}a^{i,j,\epsilon})_{s,t})]&=0.
        \label{martingale_epsilon}
    \end{split}
\end{equation}
We next intend to pass to the limit in \eqref{martingale_epsilon}. Note that by almost sure convergence and \eqref{a priori for vitali} all the terms with the exception of the appearing sewings converge due to  Vitali's convergence theorem. For the sewings $\mathcal{I}A^{j,\epsilon}$ and $\mathcal{I}a^{i,j,\epsilon}$ we shall employ Lemma \ref{sewing convergence} adapted to the stochastic sewing setting. 
\begin{lemma}
\label{step one conv}
For $s< t\in [0,T]$ and $m\geq 1$, we 
\begin{align*}
    \norm{(\mathcal{I}A^{j,\epsilon})_{s,t}-(\mathcal{I}A^j)_{s,t}}_{L^m(\bar{\Omega})}&\to 0\\
    \norm{(\mathcal{I}a^{i,j,\epsilon})_{s,t}- (\mathcal{I}a^{i,j})_{s,t}}_{L^m(\bar{\Omega})}&\to 0,
\end{align*}
where 
\begin{align*}
    A_{s,t}^{j}&=(|\sigma^* e_j|^2*L_{s,t})(\bar{X}_s)\\
    a^{i,j}_{s,t}&=(\sigma^{ji}*L_{s,t})(\bar{X}_s).
\end{align*}
\end{lemma}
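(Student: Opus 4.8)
\emph{Plan.} The two assertions are proved by one and the same argument, so I would concentrate on the first and only indicate the changes for the second: there one replaces the averaged function $g:=|\sigma^{*}e_{j}|^{2}\in L^{p/2}_{x}$ by the scalar entry $\sigma^{ji}\in L^{p}_{x}$ and invokes Theorem~\ref{regularity of averaging operator} at integrability exponent $p$ instead of $p/2$, which under the standing assumption on $H$ yields at least the same Hölder regularity (since $(p/2)\wedge 2\le p\wedge 2$). Write $g_{\epsilon}:=|\sigma_{\epsilon}^{*}e_{j}|^{2}$, so that $g_{\epsilon}\to g$ in $L^{p/2}_{x}$, and recall that $\bar{X}^{\epsilon}\to\bar{X}$ in $C([0,T])$ $\bar{\mathbb{P}}$-a.s.\ together with the uniform moment bounds \eqref{a priori for vitali} and \eqref{integralbility hoelder constant}. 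First I would note that both sewings exist: $\mathcal{I}A^{j,\epsilon}$ is the genuine Lebesgue integral by smoothness of $\sigma_{\epsilon}$ (Lemma~\ref{identification}), while $\mathcal{I}A^{j}$ exists by repeating the estimates of Lemma~\ref{a priori bound}, using $T^{-w^{H}}g\in C^{\gamma_{0}}_{t}C^{\delta}_{x}\cap C^{\gamma_{1}}_{t}C^{1+\delta}_{x}$ with $\gamma_{0}/2+\gamma_{1}>1$ and the Hölder bound \eqref{integralbility hoelder constant} on $\bar{X}$. By linearity of the sewing it then suffices to control the sewing of the difference germ $D^{\epsilon}:=A^{j,\epsilon}-A^{j}$ in $L^{m}(\bar{\Omega})$, i.e.\ to check the hypotheses of the stochastic counterpart of Lemma~\ref{sewing convergence} with all constants tending to $0$.

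The decisive step is a decomposition of $D^{\epsilon}$ into a \emph{function part} and a \emph{trajectory part}. Abbreviating $T_{s,t}:=T^{-w^{H}}_{s,t}$ I would write
\[
D^{\epsilon}_{s,t}=\big(T_{s,t}(g_{\epsilon}-g)\big)(\bar{X}^{\epsilon}_{s})+\Big[(T_{s,t}g)(\bar{X}^{\epsilon}_{s})-(T_{s,t}g)(\bar{X}_{s})\Big]=:F^{\epsilon}_{s,t}+P^{\epsilon}_{s,t},
\]
and, using the additivity $T_{s,t}=T_{s,u}+T_{u,t}$ (equivalently $L_{s,t}=L_{s,u}+L_{u,t}$), the matching splitting of the second-order increment, for which $\delta P^{\epsilon}_{s,u,t}=[h(\bar{X}^{\epsilon}_{s})-h(\bar{X}_{s})]-[h(\bar{X}^{\epsilon}_{u})-h(\bar{X}_{u})]$ with $h:=T_{u,t}g$. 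The function part is handled by the pathwise stability \eqref{stability}: both $F^{\epsilon}$ and $\delta F^{\epsilon}$ carry the prefactor $\|g_{\epsilon}-g\|_{L^{p/2}_{x}}\to0$ with exactly the Hölder exponents of Lemma~\ref{a priori bound}, so its contribution to the sewing vanishes. The trajectory part is estimated through the $C^{\delta}_{x}$- and $C^{1+\delta}_{x}$-regularity of $Tg$: one gets $|P^{\epsilon}_{s,t}|\lesssim|t-s|^{\gamma_{0}}|\bar{X}^{\epsilon}_{s}-\bar{X}_{s}|^{\delta}$, while a first order Taylor expansion of $h$ bounds $\delta P^{\epsilon}_{s,u,t}$ by terms of the shape $|t-u|^{\gamma_{1}}\big(\sup_{r}|\bar{X}^{\epsilon}_{r}-\bar{X}_{r}|\big)^{\delta}\,|\bar{X}^{\epsilon}_{s,u}|$ and $|t-u|^{\gamma_{1}}\,|\bar{X}^{\epsilon}_{s,u}-\bar{X}_{s,u}|$.

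To finish I would pass these bounds to $L^{m}(\bar{\Omega})$. The factors $\sup_{r}|\bar{X}^{\epsilon}_{r}-\bar{X}_{r}|\to0$ $\bar{\mathbb{P}}$-a.s., and by \eqref{a priori for vitali}, \eqref{integralbility hoelder constant} and Hölder's inequality on $\bar{\Omega}$ they can be separated from the uniformly $L^{m}$-bounded Hölder seminorms of $\bar{X}^{\epsilon},\bar{X}$, so that Vitali's theorem forces every such term to $0$. Exactly as in Lemma~\ref{a priori bound}, the conditional increment is dominated via Jensen by the unconditional $L^{m}$-norm, so the only genuine requirement is to keep the time exponent of $\delta P^{\epsilon}$ strictly above $1$; for the two terms that is $\gamma_{1}+\gamma_{0}/2>1$, which holds by the standing hypothesis on $H$. (Adaptedness needed to make the conditioning meaningful is arranged by taking the augmented filtration generated by all the processes $(\bar{X},\bar{B},\bar{X}^{\epsilon},\bar{B}^{\epsilon})$, which suffices here since no martingale cancellation is exploited.) Inserting the resulting vanishing, correctly scaled bounds into the stochastic sewing-stability estimate gives $\|(\mathcal{I}D^{\epsilon})_{s,t}\|_{L^{m}(\bar{\Omega})}\to0$, which is the claim.

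The main obstacle is reconciling the topology in which $\bar{X}^{\epsilon}\to\bar{X}$ (uniform, a.s.) with the Hölder scaling demanded by the sewing: the term $|\bar{X}^{\epsilon}_{s,u}-\bar{X}_{s,u}|$ is small only in the uniform sense and carries no a priori factor $|u-s|^{\gamma_{0}/2}$. I would resolve this by interpolating it between $\sup_{r}|\bar{X}^{\epsilon}_{r}-\bar{X}_{r}|$ and the uniform $C^{\gamma_{0}/2}$-bound, trading a small power $(\sup_{r}|\bar{X}^{\epsilon}_{r}-\bar{X}_{r}|)^{1-\eta}$ for a factor $|u-s|^{\eta\gamma_{0}/2}$; since $\gamma_{1}+\gamma_{0}/2>1$ strictly, choosing $\eta<1$ close enough to $1$ retains a total exponent $>1$ while preserving the vanishing prefactor needed for $L^{m}$-convergence.
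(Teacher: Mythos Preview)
Your argument is correct, including the interpolation trick at the end, but you work harder than necessary. The paper exploits the \emph{asymmetric} structure of Lemma~\ref{sewing convergence}: to conclude $\mathcal{I}A^{j,\epsilon}\to\mathcal{I}A^{j}$ one only needs a \emph{uniform} (not vanishing) bound on $\|\delta A^{j,\epsilon}\|_{\beta}$ together with convergence $\|A^{j,\epsilon}-A^{j}\|_{\alpha}\to 0$ at the germ level. The uniform bound on $\delta A^{j,\epsilon}$ comes straight from the estimates of Lemma~\ref{a priori bound} (this is your ``function part'' with $g_\epsilon$ and the Hölder bound on $\bar X^\epsilon$), and the germ convergence splits exactly as your $F^\epsilon+P^\epsilon$ but requires only the zeroth-order bounds $|F^\epsilon_{s,t}|\lesssim\|g_\epsilon-g\|_{L^{p/2}}|t-s|^{\gamma_0}$ and $|P^\epsilon_{s,t}|\lesssim |t-s|^{\gamma_1}\|\bar X^\epsilon-\bar X\|_\infty$ (together with Vitali to upgrade a.s.\ convergence of $\bar X^\epsilon$ to $L^m$). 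In particular there is no need to control $\delta P^\epsilon$ at all, and the obstacle you identify---that $|\bar X^\epsilon_{s,u}-\bar X_{s,u}|$ carries no Hölder factor---simply does not arise.

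What your route buys is a direct continuity estimate $\|\mathcal{I}D^\epsilon\|\lesssim\|D^\epsilon\|_\alpha+\|\delta D^\epsilon\|_\beta$ via the sewing a~priori bound, which is conceptually transparent and would be needed if one wanted a quantitative rate of convergence. The paper's route trades this for a softer compactness-flavoured argument that avoids estimating the second-order increment of the difference germ.
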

\begin{proof}
The assertion follows from the stability of averaging operators cited in \eqref{regularity of averaging operator} and the fact that $\sigma_\epsilon\to \sigma $ in $L^p_x$ and $\bar{X}^\epsilon\to \bar{X}$ in $C([0,T], \R^d)$ as well as Lemma \ref{sewing convergence}.\\
\\
Indeed, observe that due to \eqref{a priori for vitali} we have that
\begin{align*}
    \norm{(\delta a^{i,j, \epsilon})_{s,u,t}}_{L^m(\bar{\Omega})}&=\norm{(\sigma^{ji}_\epsilon*L_{u,t})(\bar{X}^\epsilon_u)-(\sigma^{ji}_\epsilon*L_{u,t})(\bar{X}^\epsilon_s)}_{L^m(\bar{\Omega})}\\
    &\lesssim\bar{\mathbb{E}}\left[\sup_{t\neq s\in[0,T]}\left|\frac{\bar{X}^\epsilon_{s,t}}{|t-s|^{\gamma_0/2}}\right|^m\right]^{1/m}\norm{\sigma_\epsilon}_{L^p_x}|t-s|^{\gamma_0/2+\gamma_1}\\
    &\lesssim \norm{\sigma}_{L^p_x}|t-s|^{\gamma_0/2+\gamma_1}
\end{align*}
uniformly in $\epsilon>0$. Moreover, also by \eqref{a priori for vitali} and Vitali's theorem, we have that actually $X^\epsilon\to X$ in $L^m(\bar{\Omega}, C([0,T], \R^d))$. We therefore observe that
\begin{align*}
    \norm{a^{i,j}_{s,t}-a^{i,j, \epsilon}_{s,t}}_{L^m(\bar{\Omega})}&\leq \norm{(\sigma^{ji}*L_{s,t})(\bar{X}_s)-(\sigma^{ji}*L_{s,t})(\bar{X}^\epsilon_s)}_{L^m(\bar{\Omega})}\\
    &+\norm{(\sigma^{ji}*L_{s,t})(\bar{X}_s^\epsilon)-(\sigma^{ji}_\epsilon*L_{s,t})(\bar{X}^\epsilon_s)}_{L^m(\bar{\Omega})}\\
    &\lesssim |t-s|^{\gamma_1}\bar{\mathbb{E}}[\norm{\bar{X}^\epsilon-\bar{X}}_{\infty}^m]^{1/m}+\norm{\sigma-\sigma^\epsilon}_{L^p}|t-s|^{\gamma_0/2}.
\end{align*}
By Lemma \ref{sewing convergence}, this implies that indeed $\mathcal{I}a^{i,j, \epsilon}\to \mathcal{I}a^{i,j}$ in $C^{\gamma_0/2 \wedge \gamma_1}_tL^m(\bar{\Omega})$ and thus in particular the claim. Similar considerations hold for $A^{j, \epsilon}$ by remarking that $$\norm{|\sigma^*_\epsilon e_j|^2-|\sigma^*e_j|^2}_{L^{p/2}_x}\to 0.$$
\end{proof}
By the above Lemma, we may now pass to the limit in \eqref{martingale_epsilon}, obtaining:
\begin{cor}
\label{step two conv}
For $i=1, \dots, n$ and $j=1, \dots, d$, the processes 
\begin{equation}
    \begin{split}
          t &\to M^{j}_{t}:=\langle e_j, \bar{X}_t-x_0\rangle, \\
  t &\to (M^j_t)^2-(\mathcal{I}A^j)_t,\\
t &\to M^{j}_t  \langle \bar{B}_t, f_i\rangle -(\mathcal{I}a^{i,j})_t,
    \end{split}
    \label{martingale-limit}
\end{equation}
are martingales with respect to the filtration $(\bar{\mathcal{F}}_t)_t$. 
\end{cor}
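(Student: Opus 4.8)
The plan is to pass to the limit $\epsilon\to 0$ in the three identities \eqref{martingale_epsilon}, which characterize the $\epsilon$-processes \eqref{start} as $(\bar{\mathcal{F}}_t)_t$-martingales, and thereby recover the analogous identities for the limit processes \eqref{martingale-limit}. Recall first that a $\bar{\mathbb{P}}$-integrable adapted process $N$ is an $(\bar{\mathcal{F}}_t)_t$-martingale if and only if $\bar{\mathbb{E}}[\phi(\bar{X}|_{[0,s]},\bar{B}|_{[0,s]})(N_t-N_s)]=0$ for every $s<t$ and every bounded continuous functional $\phi$ on $C([0,s],\R^d)\times C([0,s],\R^n)$; since such cylindrical functionals generate $\bar{\mathcal{F}}_s$, this reduction is justified by a standard monotone class argument. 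Adaptedness of the candidates is immediate, $M^j_t$ being a functional of $\bar{X}_t$ and the sewings $(\mathcal{I}A^j)_t,(\mathcal{I}a^{i,j})_t$ being functionals of $\bar{X}$ up to time $t$ and of the fixed local time $L$. It therefore suffices to show that each term in \eqref{martingale_epsilon} converges to its $\epsilon$-removed counterpart.

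First I would handle the terms not involving sewings. By Skorokhod's theorem $(\bar{X}^\epsilon,\bar{B}^\epsilon)\to(\bar{X},\bar{B})$ $\bar{\mathbb{P}}$-almost surely in $C([0,T],\R^d)\times C([0,T],\R^n)$, so that $M^{j,\epsilon}_t\to M^j_t$, $(M^{j,\epsilon}_t)^2\to(M^j_t)^2$ and $M^{j,\epsilon}_t\langle\bar{B}^\epsilon_t,f_i\rangle\to M^j_t\langle\bar{B}_t,f_i\rangle$ almost surely, while $\phi(\bar{X}^\epsilon|_{[0,s]},\bar{B}^\epsilon|_{[0,s]})\to\phi(\bar{X}|_{[0,s]},\bar{B}|_{[0,s]})$ by continuity of $\phi$ and of the restriction map. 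To promote these almost sure convergences to convergence of expectations I would invoke Vitali's convergence theorem. The required uniform integrability is furnished by the $\epsilon$-uniform moment bounds: the law of $\bar{X}^\epsilon$ coincides with that of $X^\epsilon$, so $\bar{\mathbb{E}}[\sup_{t}|\bar{X}^\epsilon_t|^m]$ is bounded uniformly in $\epsilon$ for every $m$ by Lemma \ref{a priori bound}, and $\bar{B}^\epsilon$ is a Brownian motion with the standard moment bounds. Picking $m>2$ and using boundedness of $\phi$ together with Hölder's inequality, each relevant product is bounded in $L^{1+\delta}(\bar{\Omega})$ uniformly in $\epsilon$, hence uniformly integrable.

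For the two sewing terms $(\mathcal{I}A^{j,\epsilon})_{s,t}$ and $(\mathcal{I}a^{i,j,\epsilon})_{s,t}$ I would invoke Lemma \ref{step one conv}, which yields their convergence in $L^m(\bar{\Omega})$ to $(\mathcal{I}A^j)_{s,t}$ and $(\mathcal{I}a^{i,j})_{s,t}$. Since $\phi$ is bounded, the $L^1$ part of this convergence gives $\bar{\mathbb{E}}[\phi(\cdots)(\mathcal{I}A^{j,\epsilon})_{s,t}]\to\bar{\mathbb{E}}[\phi(\cdots)(\mathcal{I}A^j)_{s,t}]$, and similarly for $a^{i,j}$. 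Assembling the limits of all terms, the three identities \eqref{martingale_epsilon} pass in the limit to exactly the identities characterizing \eqref{martingale-limit} as $(\bar{\mathcal{F}}_t)_t$-martingales. Integrability of the candidate martingales themselves follows from the same uniform moment estimates for $M^j$ and the mixed term, and from $(\mathcal{I}A^j)_t,(\mathcal{I}a^{i,j})_t$ being $L^m(\bar{\Omega})$-limits.

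The hard part will be the joint use of almost sure convergence and uniform integrability for the quadratic term $(M^{j,\epsilon}_t)^2$ and the mixed Brownian term $M^{j,\epsilon}_t\langle\bar{B}^\epsilon_t,f_i\rangle$, where one truly needs the uniform-in-$\epsilon$ higher moments of Lemma \ref{a priori bound} together with the Brownian moments; the remaining subtlety is the verification that testing against continuous cylindrical functionals genuinely characterizes the martingale property with respect to the augmented generated filtration $(\bar{\mathcal{F}}_t)_t$. The sewing contributions, by contrast, require no additional work beyond citing Lemma \ref{step one conv}.
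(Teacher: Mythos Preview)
Your proposal is correct and follows essentially the same approach as the paper: the paper states that by almost sure convergence and \eqref{a priori for vitali} all non-sewing terms converge via Vitali, while the sewing terms are handled by Lemma~\ref{step one conv}, and then simply records the corollary without further detail. Your write-up in fact supplies more of the routine justification (adaptedness, integrability, the monotone class reduction) than the paper does; the only small imprecision is that in the sewing contribution the test functional $\phi(\bar{X}^\epsilon|_{[0,s]},\bar{B}^\epsilon|_{[0,s]})$ still carries an $\epsilon$, so one should split $\phi^\epsilon(\mathcal{I}A^{j,\epsilon})_{s,t}-\phi(\mathcal{I}A^{j})_{s,t}$ into $\phi^\epsilon\bigl((\mathcal{I}A^{j,\epsilon})_{s,t}-(\mathcal{I}A^{j})_{s,t}\bigr)+(\phi^\epsilon-\phi)(\mathcal{I}A^{j})_{s,t}$ before invoking boundedness of $\phi$ and the $L^m$ convergence from Lemma~\ref{step one conv}.
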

In order to conclude that indeed $M$ coincides with the corresponding stochastic integral in the limit, we need to extend \cite[Proposition A.1]{HOFMANOVA20134294} to our sewing setting. That is precisely the content of the next Lemma \ref{martingale identification}, a technical part of which we moved into the subsequent Lemma \ref{identificationII} for the sake of readability.

\begin{lemma}
Suppose that for $i=1, \dots, n$ and $j=1, \dots, d$, the processes in \eqref{martingale-limit} are martingales. Then we have 
\[
M_t=\int_0^t \sigma(\bar{X}_s-w^H_s)d\bar{B}_s.
\]
\label{martingale identification}
\end{lemma}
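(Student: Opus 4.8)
The plan is to argue component-wise and reduce the identification to a quadratic-variation computation, in the spirit of \cite[Proposition A.1]{HOFMANOVA20134294}. Write $\bar B^{(i)}_t=\langle \bar B_t,f_i\rangle$ for the components of the limiting Brownian motion, so that $\langle \bar B^{(i)},\bar B^{(i')}\rangle_t=\delta_{ii'}t$. From the first line of \eqref{martingale-limit} each $M^j$ is a continuous $(\bar{\mathcal F}_t)_t$-martingale with $M^j_0=\langle e_j,\bar X_0-x_0\rangle=0$. Since $\mathcal I A^j$ and $\mathcal I a^{i,j}$ are adapted, continuous and of finite variation (indeed $\mathcal I A^j$ is nondecreasing, being the sewing of the nonnegative germ $(|\sigma^*e_j|^2*L_{s,t})(\bar X_s)$), the second and third lines of \eqref{martingale-limit} identify, by uniqueness of the canonical decomposition of the semimartingales $(M^j)^2$ and $M^j\bar B^{(i)}$, the brackets
\[
\langle M^j\rangle_t=(\mathcal I A^j)_t,\qquad \langle M^j,\bar B^{(i)}\rangle_t=(\mathcal I a^{i,j})_t .
\]

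First I would make sense of the candidate stochastic integral. Applying Theorem \ref{regularity of averaging operator} to the trace germ $\|\sigma\|_{HS}^2\in L^{p/2}_x$ together with Lemma \ref{identification} and linearity of the sewing, the quantity $\sum_j \mathcal I A^j=\mathcal I\big((\|\sigma\|_{HS}^2*L_{s,t})(\bar X_s)\big)$ can be identified with the Lebesgue integral $\int_0^t \|\sigma(\bar X_s-w^H_s)\|_{HS}^2\,ds$, which is therefore a.s. finite on $[0,T]$; this is exactly the content deferred to Lemma \ref{identificationII}. Consequently $s\mapsto \sigma(\bar X_s-w^H_s)$ is a.s. square-integrable and the It\^o integral
\[
I^j_t:=\sum_{i=1}^{n}\int_0^t \sigma^{ji}(\bar X_s-w^H_s)\,d\bar B^{(i)}_s=\Big\langle e_j,\int_0^t\sigma(\bar X_s-w^H_s)\,d\bar B_s\Big\rangle
\]
is a well-defined continuous local martingale with $I^j_0=0$, and the same identification gives $\langle M^j,\bar B^{(i)}\rangle_t=\int_0^t\sigma^{ji}(\bar X_s-w^H_s)\,ds$ and $\langle M^j\rangle_t=\int_0^t|\sigma^*(\bar X_s-w^H_s)e_j|^2\,ds$.

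With these identities in hand, the main step is to show $\langle M^j-I^j\rangle\equiv 0$. By bilinearity and the bracket formula for stochastic integrals one computes
\[
\langle M^j,I^j\rangle_t=\sum_{i=1}^n\int_0^t \sigma^{ji}(\bar X_s-w^H_s)\,d\langle M^j,\bar B^{(i)}\rangle_s=\int_0^t|\sigma^*(\bar X_s-w^H_s)e_j|^2\,ds,
\]
while $\langle I^j\rangle_t=\sum_{i}\int_0^t(\sigma^{ji}(\bar X_s-w^H_s))^2\,ds=\int_0^t|\sigma^*(\bar X_s-w^H_s)e_j|^2\,ds$ as well. Combining with the expression for $\langle M^j\rangle_t$ yields $\langle M^j-I^j\rangle_t=\langle M^j\rangle_t-2\langle M^j,I^j\rangle_t+\langle I^j\rangle_t=0$. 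Since $M^j-I^j$ is a continuous local martingale started at $0$ with vanishing quadratic variation, it is indistinguishable from $0$; hence $M^j_t=I^j_t$ for every $j$, i.e. $\bar X_t-x_0=\int_0^t\sigma(\bar X_s-w^H_s)\,d\bar B_s$, which is the claim.

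The hard part is the identification relegated to Lemma \ref{identificationII}: because $\sigma$ is merely $L^p$, neither the running quadratic variation nor the candidate integrand $\sigma^{ji}(\bar X_s-w^H_s)$ is a priori classically defined, and one must show that the sewings $\mathcal I A^j$ and $\mathcal I a^{i,j}$ coincide with genuine absolutely continuous integrals along the path $\bar X-w^H$. This is precisely where the regularization by noise is used: the spatial H\"older regularity of the averaged fields from Theorem \ref{regularity of averaging operator}, together with the time H\"older regularity of $\bar X$ recorded in \eqref{integralbility hoelder constant}, guarantees both the a.s. finiteness of $\int_0^t\|\sigma(\bar X_s-w^H_s)\|_{HS}^2\,ds$ and the consistency of the two occurrences of $\sigma^{ji}(\bar X-w^H)$ entering the computation of $\langle M^j,I^j\rangle$. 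Once that consistency is granted, the bracket manipulations above are routine.
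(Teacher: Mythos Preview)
Your bracket strategy is the natural one, but it rests on two identifications that are \emph{not} available in the paper and are not what Lemma~\ref{identificationII} provides. You claim that $(\mathcal I A^{j})_t$ and $(\mathcal I a^{i,j})_t$ coincide with the Lebesgue integrals $\int_0^t|\sigma^*(\bar X_s-w^H_s)e_j|^2\,ds$ and $\int_0^t\sigma^{ji}(\bar X_s-w^H_s)\,ds$, and you attribute this to Lemma~\ref{identificationII}. But Lemma~\ref{identification} only identifies sewings with Lebesgue integrals for \emph{smooth bounded} $\sigma_\epsilon$; for $\sigma\in L^p_x$ the pointwise value $\sigma(\bar X_s-w^H_s)$ is not even defined, and no such identification is proved (or claimed) anywhere. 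Lemma~\ref{identificationII} is a different statement: it computes the cross moment $\bar{\mathbb E}[\langle M_t,e_j\rangle\langle\int_0^t\sigma_\epsilon(\bar X_s-w^H_s)\,d\bar B_s,e_j\rangle]$ as the sewing of the mixed germ $((\sigma\sigma_\epsilon^*)^{jj}*L_{s,t})(\bar X_s)$, with one factor kept mollified.

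This matters for your key step $\langle M^j,I^j\rangle_t=\sum_i\int_0^t\sigma^{ji}(\bar X_s-w^H_s)\,d\langle M^j,\bar B^{(i)}\rangle_s=\int_0^t|\sigma^*(\bar X_s-w^H_s)e_j|^2\,ds$. Here you need $I^j$ to be a classical It\^o integral with a pointwise integrand (whereas in the paper it is only defined as an $L^m$-limit of mollified integrals, cf.\ Lemma~\ref{extending the integral}), and you need the pointwise product of the two rough factors $\sigma^{ji}(\bar X-w^H)\cdot\sigma^{ji}(\bar X-w^H)$ to equal the density of the sewing $(\mathcal I A^j)$. Neither is justified. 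The paper's proof deliberately sidesteps this product-of-two-rough-factors problem: it expands $\bar{\mathbb E}[\langle M_t-\int_0^t\sigma_\epsilon(\bar X_s-w^H_s)\,d\bar B_s,e_j\rangle^2]$ into three expectations, rewrites each as the expectation of a sewing (the cross term via Lemma~\ref{identificationII}, where one slot stays smooth), and only then lets $\epsilon\to0$ using the stability of the averaging operator. Your argument would be correct if you could upgrade Lemma~\ref{identification} to rough $\sigma$, but that is essentially the heart of the matter and is not established.
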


\begin{proof}
We show that for any $j=1, \ldots, d$ 
\[
\bar{\mathbb{E}}[\langle M_t-\int_0^t \sigma(\bar{X}_r-w^H_r)d\bar{B}_r, e_j\rangle  ^2]=0.
\]
Let $\sigma_\epsilon$ be again mollifications multiplied with smooth cut off functions. Note that by definition (refer to Lemma \ref{extending the integral}), 
\[
\lim_{\epsilon\to 0}\bar{\mathbb{E}}[\langle\int_0^t \sigma(\bar{X}_s-w^H_s)d\bar{B}_s-\int_0^t \sigma_\epsilon(\bar{X}_s-w^H_s)d\bar{B}_s, e_j\rangle ^2]=0.
\]
Hence, it suffices to show
\[
\bar{\mathbb{E}}[\langle M_t-\int_0^t \sigma_\epsilon(\bar{X}_s-w^H_s)d\bar{B}_s, e_j\rangle^2]\to 0.
\]
Note that by our previous findings, we have
\begin{align*}
&\bar{\mathbb{E}}[\langle M_t-\int_0^t \sigma_\epsilon(\bar{X}_s-w^H_s)d\bar{B}_s, e_j\rangle ^2]\\
=&\bar{\mathbb{E}}[\langle M_t, e_j\rangle^2]+\bar{\mathbb{E}}[\langle \int_0^t \sigma_\epsilon(\bar{X}_s-w^H_s)d\bar{B}_s, e_j\rangle^2]
-2\bar{\mathbb{E}}[\langle M_t, e_j\rangle \langle \int_0^t \sigma_\epsilon(\bar{X}_s-w^H_s)d\bar{B}_s, e_j\rangle ]\\
=&\bar{\mathbb{E}}[(\mathcal{I}A^{j})_t]+\bar{\mathbb{E}}[\int_0^t |\sigma_\epsilon^*(\bar{X}_s-w^H_s)e_j|^2ds]-2\bar{\mathbb{E}}[\langle M_t, e_j\rangle \langle \int_0^t \sigma_\epsilon(\bar{X}_s-w^H_s)d\bar{B}_s, e_j\rangle ],
\end{align*}
where we recall that 
\[
A^j_{s,t}=(|\sigma^* e_j|^2*L_{s,t})(\bar{X}_s)=((\sigma \sigma^*)^{jj}*L_{s,t})(\bar{X}_s).
\]
Note moreover that we have, again by identification of the Lebesgue integral with the Sewing in the smooth setting
\[
\bar{\mathbb{E}}[\int_0^t |\sigma_\epsilon^*(\bar{X}_s-w^H_s)e_j|^2ds]=\bar{\mathbb{E}}[(\mathcal{I}F^{j,\epsilon})_{s,t}],
\]
where
\[
F^{j,\epsilon}_{s,t}=((\sigma_\epsilon \sigma_\epsilon^*)^{jj}*L_{s,t})(\bar{X}_s).
\]
By Lemma \ref{identificationII} we have
\[
\bar{\mathbb{E}}[\langle M_t, e_j\rangle \langle \int_0^t \sigma_\epsilon(\bar{X}_s-w^H_s)d\bar{B}_s, e_j\rangle ]=\bar{\mathbb{E}}[(\mathcal{I}G^{j, \epsilon})_t],
\]
where
\[
G^{j,\epsilon}_{s,t}=((\sigma \sigma_\epsilon^*)^{jj}*L_{s,t})(\bar{X}_s).
\]
We therefore conclude that 
\begin{align*}
    &\bar{\mathbb{E}}[\langle M_t-\int_0^t \sigma_\epsilon(\bar{X}_s-w^H_s)d\bar{B}_s, e_j\rangle ^2]\\
    =&\bar{\mathbb{E}}[(\mathcal{I}A^{j})_t]+\bar{\mathbb{E}}[(\mathcal{I}F^{j,\epsilon})_{s,t}]-2\bar{\mathbb{E}}[(\mathcal{I}G^{j, \epsilon})_t]
\end{align*}
and since the Sewings are stable under $\epsilon \to 0$ thanks to the stability property in Lemma~\ref{regularity of averaging operator} and Lemma \ref{sewing convergence}, we may indeed conclude our claim that 
\[
M_t=\int_0^t \sigma(\bar{X}_s-w^H_s)d\bar{B}_s.
\]

\end{proof}
In summary, this concludes the proof of Theorem \ref{main theorem}. 
\begin{lemma}
\label{identificationII}
Suppose the conditions of Lemma \ref{a priori bound}. Let $(e_j)_{j=1, \dots, d}$ be a canonical basis of $\R^d$. For $\epsilon>0$ fixed and $d/p<1$ we have for any $j=1, \dots d$
\[
\bar{\mathbb{E}}[\langle M_t, e_j\rangle \langle  \int_0^t \sigma_\epsilon(\bar{X}_s-w^H_s)d\bar{B}_s, e_j\rangle ]=\bar{\mathbb{E}}[(\mathcal{I}G^{j, \epsilon})_t],
\]
where
\[
G^{j,\epsilon}_{s,t}=((\sigma \sigma_\epsilon^*)^{jj}*L_{s,t})(\bar{X}_s).
\]
\end{lemma}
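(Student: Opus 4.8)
The plan is to read the left-hand side as the cross-variation of two martingales and then to recognise the resulting drift as the sewing of $G^{j,\epsilon}$. First I would record that both $M^j=\langle e_j,\bar X_\cdot-x_0\rangle$ and $N^{j,\epsilon}_\cdot:=\langle\int_0^\cdot\sigma_\epsilon(\bar X_s-w^H_s)d\bar B_s,e_j\rangle=\sum_{i=1}^n\int_0^\cdot\sigma_\epsilon^{ji}(\bar X_r-w^H_r)d\bar B^i_r$ are continuous square-integrable $(\bar{\mathcal F}_t)_t$-martingales: the former by Corollary \ref{step two conv} together with the moment bound \eqref{integralbility hoelder constant}, the latter as an Itô integral with bounded (smooth, compactly supported) integrand $\sigma_\epsilon$. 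Hence $\bar{\mathbb E}[\langle M_t,e_j\rangle\langle\int_0^t\sigma_\epsilon d\bar B,e_j\rangle]=\bar{\mathbb E}[M^j_tN^{j,\epsilon}_t]=\bar{\mathbb E}[\langle M^j,N^{j,\epsilon}\rangle_t]$. Since $N^{j,\epsilon}$ is a stochastic integral against $\bar B$, the cross-variation computes as $\langle M^j,N^{j,\epsilon}\rangle_t=\sum_i\int_0^t\sigma_\epsilon^{ji}(\bar X_r-w^H_r)\,d\langle M^j,\bar B^i\rangle_r$, and the third martingale in Corollary \ref{step two conv} identifies $\langle M^j,\bar B^i\rangle=\mathcal I a^{i,j}$ as a continuous adapted finite-variation process. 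Everything thus reduces to identifying the path-by-path integral $\sum_i\int_0^t\sigma_\epsilon^{ji}(\bar X_r-w^H_r)\,d(\mathcal I a^{i,j})_r$ with $(\mathcal I G^{j,\epsilon})_t$.

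Writing $\Psi^{(i)}_{s,t}:=((\sigma^{ji}\sigma_\epsilon^{ji})*L_{s,t})(\bar X_s)$ and noting $(\sigma\sigma_\epsilon^*)^{jj}=\sum_i\sigma^{ji}\sigma_\epsilon^{ji}$, linearity of the sewing gives $\mathcal I G^{j,\epsilon}=\sum_i\mathcal I\Psi^{(i)}$, so it suffices to treat each $i$ separately and prove $\int_0^t\sigma_\epsilon^{ji}(\bar X_r-w^H_r)\,d(\mathcal I a^{i,j})_r=(\mathcal I\Psi^{(i)})_t$. As both sides are genuine additive functionals of $t$, the uniqueness part of the Sewing Lemma reduces this to the estimate $|J_{s,t}-\Psi^{(i)}_{s,t}|\lesssim|t-s|^{1+\mu}$ for the increment $J_{s,t}:=\int_s^t\sigma_\epsilon^{ji}(\bar X_r-w^H_r)\,d(\mathcal I a^{i,j})_r$. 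I would decompose $J_{s,t}-\Psi^{(i)}_{s,t}$ into three pieces: (i) the freezing of the integrand at the left endpoint, $J_{s,t}-\sigma_\epsilon^{ji}(\bar X_s-w^H_s)(\mathcal I a^{i,j})_{s,t}$; (ii) the replacement of the integral by its germ, $\sigma_\epsilon^{ji}(\bar X_s-w^H_s)[(\mathcal I a^{i,j})_{s,t}-a^{i,j}_{s,t}]$; and (iii) the frozen-product error $\sigma_\epsilon^{ji}(\bar X_s-w^H_s)a^{i,j}_{s,t}-\Psi^{(i)}_{s,t}=\int_s^t[\sigma_\epsilon^{ji}(\bar X_s-w^H_s)-\sigma_\epsilon^{ji}(\bar X_s-w^H_r)]\,\sigma^{ji}(\bar X_s-w^H_r)\,dr$.

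Piece (ii) is $O(|t-s|^{\gamma_0/2+\gamma_1})$ by the Sewing Lemma and is harmless since $\gamma_0/2+\gamma_1>1$. Piece (i) is controlled by the finite-variation bound $V_{[s,t]}(\mathcal I a^{i,j})\lesssim\langle M^j\rangle_{s,t}^{1/2}\langle\bar B^i\rangle_{s,t}^{1/2}\lesssim|t-s|^{(\gamma_0+1)/2}$ (Kunita–Watanabe) together with the local Hölder continuity $|w^H_s-w^H_r|\lesssim|r-s|^{H-\kappa}$ and smoothness of $\sigma_\epsilon$, giving order $|t-s|^{(H-\kappa)+(\gamma_0+1)/2}$. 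The binding term is (iii): using $|w^H_s-w^H_r|\lesssim|r-s|^{H-\kappa}$ and the $C^0_x$ averaging bound $\|T^{-w^H}_{s,t}|\sigma^{ji}|\|_{L^\infty_x}\lesssim|t-s|^{\gamma_0}$ it is of order $|t-s|^{(H-\kappa)+\gamma_0}$, so the whole scheme hinges on arranging $H+\gamma_0>1$. This is exactly where the hypothesis $d/p<1$ enters: invoking the sharper temporal regularity of Theorem \ref{regularity of averaging operator I} (rather than Theorem \ref{regularity of averaging operator}), the operator $T^{-w^H}\sigma^{ji}$ with $\sigma^{ji}\in L^p_x$ is $\gamma_0$-Hölder in time for every $\gamma_0<1-Hd/p$, and since $d/p<1$ this range contains exponents with $\gamma_0>1-H$. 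Fixing such $\gamma_0$ makes (i)–(iii) of order $|t-s|^{1+\mu}$, whence $J=\mathcal I\Psi^{(i)}$; summing over $i$ and taking expectations yields the claim.

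The single genuine obstacle is therefore the frozen-product term (iii), whose order is dictated purely by the roughness of $w^H$ and becomes summable only because $d/p<1$ buys the extra temporal Hölder regularity $\gamma_0<1-Hd/p$. I note that the same identification can instead be obtained by mollifying $\sigma$ once more to $\sigma_\delta$, using Lemma \ref{identification} to write $\int_0^t\sigma_\epsilon^{ji}(\bar X_r-w^H_r)\,d(\mathcal I a^{i,j,\delta})_r=(\mathcal I\Psi^{(i),\delta})_t$ in the smooth case and letting $\delta\to0$ via the stability of the sewing (Lemma \ref{sewing convergence} and Theorem \ref{regularity of averaging operator}, since $\sigma_\delta^{ji}\to\sigma^{ji}$ and $\sigma_\delta^{ji}\sigma_\epsilon^{ji}\to\sigma^{ji}\sigma_\epsilon^{ji}$ in $L^p_x$); here too it is precisely $H+\gamma_0>1$ that guarantees the integral against $\mathcal I a^{i,j}$ is stable under the limit.
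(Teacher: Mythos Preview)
Your argument is correct and arrives at the same analytic bottleneck as the paper, but the route is genuinely different. The paper works by hand with Riemann sums for the It\^o integral: it discretises $\sigma_\epsilon(\bar X_\cdot-w^H_\cdot)$ by elementary processes, feeds in the martingale identity $\bar{\mathbb E}[M^j_{t_{i+1}}\bar B^k_{t_{i+1}}-M^j_{t_i}\bar B^k_{t_i}\mid\bar{\mathcal F}_{t_i}]=\bar{\mathbb E}[(\mathcal I a^{k,j})_{t_i,t_{i+1}}\mid\bar{\mathcal F}_{t_i}]$, and recognises the resulting partial sums as the Riemann sums of a new germ $Z^{j,\epsilon}_{s,t}=\bar{\mathbb E}[\sum_k\sigma_\epsilon^{jk}(\bar X_s-w^H_s)(\sigma^{jk}*L_{s,t})(\bar X_s)]$, which it then shows admits a sewing and coincides with $\mathcal I G^{j,\epsilon}$ by a further mollification. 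You instead invoke the covariation calculus directly: $\bar{\mathbb E}[M^j_tN^{j,\epsilon}_t]=\bar{\mathbb E}[\langle M^j,N^{j,\epsilon}\rangle_t]$, the associativity $\langle M^j,\int\sigma_\epsilon\,d\bar B\rangle=\int\sigma_\epsilon\,d\langle M^j,\bar B\rangle$, the identification $\langle M^j,\bar B^i\rangle=\mathcal I a^{i,j}$, and then a pathwise Young/sewing uniqueness argument to match the Stieltjes integral with $\mathcal I\Psi^{(i)}$. This is cleaner and more conceptual; the paper's computation is more explicit but longer. Crucially, both proofs isolate the \emph{same} obstruction---your piece (iii), the paper's second term in $\delta Z^{j,\epsilon}$---where the Lipschitz bound on $\sigma_\epsilon$ along $w^H$ (order $|t-s|^{H-\kappa}$) must be paired with the sharp temporal regularity $\|T^{-w^H}_{s,t}\sigma^{ji}\|_{L^\infty_x}\lesssim|t-s|^{1-Hd/p-\eta}$ from Theorem~\ref{regularity of averaging operator I}, and it is precisely $d/p<1$ that makes $H+1-Hd/p>1$.

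Two small points worth tightening. First, the identification $\langle M^j,\bar B^i\rangle=\mathcal I a^{i,j}$ is not literally ``by Corollary~\ref{step two conv}'': that corollary only says $M^j\bar B^i-\mathcal I a^{i,j}$ is a martingale, so one must still observe that $\mathcal I a^{i,j}$ is $C^{\gamma}$ with $\gamma>1/2$ (hence of zero quadratic variation), which forces the continuous martingale $\langle M^j,\bar B^i\rangle-\mathcal I a^{i,j}$ to vanish. Second, in piece (i) the increment of the integrand also picks up $|\bar X_r-\bar X_s|\lesssim|t-s|^{\gamma_0/2}$, not only the $w^H$ contribution; this extra term is harmless (it gives order $\gamma_0+1/2>1$), but should be recorded. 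Finally, be aware that you are using the symbol $\gamma_0$ for two different exponents: the paper's $\gamma_0<1-Hd/2$ in (i)--(ii), and the sharper $\gamma_0<1-Hd/p$ from Theorem~\ref{regularity of averaging operator I} in (iii).
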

\begin{proof}
Note that $\mathbb{P}^H$ almost surely the process 
\[
t\to \sigma_\epsilon(\bar{X}_t-w^H_t)
\]
is progressively measurable and in $L^2(\Omega\times [0,T])$. Hence we can approximate it by elementary processes, i.e. take 
\[
\sigma_{\epsilon, N}(s):=\sum_{i=1}^N \sigma_\epsilon(\bar{X}_{t_i}-w^H_{t_i})1_{[t_i, t_{i+1})}(s)
\]
where $s=t_1<t_2<\dots <t_{N+1}=t$. Let $(f_k)_{k=1, \ldots, n}$ be the canonical basis of $\R^n$. Then 
\begin{align*}
    &\bar{\mathbb{E}}[\langle M_t-M_s, e_j\rangle \langle  \int_s^t \sigma_{\epsilon, N}(r)d\bar{B}_r, e_j\rangle  |\bar{\mathcal{F}}_s]\\
    =& \bar{\mathbb{E}}[ \sum_i^N \langle M_{t_{i+1}}-M_{t_i}, e_j\rangle \langle (\bar{B}_{t_{i+1}}-\bar{B}_{t_i}), \sigma_\epsilon^*(\bar{X}_{t_i}-w^H_{t_i}) e_j\rangle  |\bar{\mathcal{F}}_s]\\
    =& \sum_i^N \sum_{k=1}^n\bar{\mathbb{E}}[\langle f_k, \sigma^*_\epsilon(\bar{X}_{t_i}-w^H_{t_i}) e_j\rangle \bar{\mathbb{E}}[\langle M_{t_{i+1}}, e_j\rangle \langle \bar{B}_{t_{i+1}}, f_k\rangle-\langle M_{t_{i}}, e_j\rangle \langle \bar{B}_{t_{i}}, f_k\rangle|\bar{\mathcal{F}}_{t_i}]|\bar{\mathcal{F}}_s] \\
    =& \sum_i^N\sum_{k=1}^n\bar{\mathbb{E}}[\langle f_k, \sigma^*_\epsilon(\bar{X}_{t_i}-w^H_{t_i}) e_j\rangle\bar{\mathbb{E}}[(\mathcal{I}a^{k,j})_{t_i, t_{i+1}}|\bar{\mathcal{F}}_{t_i}]\bar{\mathcal{F}}_{s}]\\
    =& \bar{\mathbb{E}}[\sum_i^N\sum_{k=1}^n\langle f_k, \sigma^*_\epsilon(\bar{X}_{t_i}-w^H_{t_i}) e_j\rangle(\mathcal{I}a^{k,j})_{t_i, t_{i+1}})|\bar{\mathcal{F}}_s].
\end{align*}
Upon taking expectation we obtain
\begin{align*}
    &\bar{\mathbb{E}}[\langle M_t-M_s, e_j\rangle \langle  \int_s^t \sigma_{\epsilon, N}(r)d\bar{B}_r, e_j\rangle]\\
    &=\sum_i^N\bar{\mathbb{E}}[\sum_{k=1}^n\langle f_k, \sigma^*_\epsilon(\bar{X}_{t_i}-w^H_{t_i}) e_j\rangle(\sigma^{jk}*L_{t_i, t_{i+1}})(\bar{X}_{t_i})]\\
    &+\sum_i^N\bar{\mathbb{E}}[\sum_{k=1}^n\langle f_k, \sigma^*_\epsilon(\bar{X}_{t_i}-w^H_{t_i}) e_j\rangle\left((\mathcal{I}a^{k,j})_{t_i, t_{i+1}})-(\sigma^{jk}*L_{t_i, t_{i+1}})(\bar{X}_{t_i})\right)].
\end{align*}
Remark that by the (classical) Sewing Lemma in combination with \eqref{integralbility hoelder constant}, the second sum above vanishes in the limit $N\to \infty$. Notice moreover that 
\[
\lim_{N\to \infty}\sum_i^N\bar{\mathbb{E}}[\sum_{k=1}^n\langle f_k, \sigma^*_\epsilon(\bar{X}_{t_i}-w^H_{t_i}) e_j\rangle(\sigma^{jk}*L_{t_i, t_{i+1}})(\bar{X}_{t_i})],
\]
if convergent, is by definition nothing but the sewing of the germ 
\[
Z_{s,t}^{j, \epsilon}:=\bar{\mathbb{E}}[\sum_{k=1}^n\langle f_k, \sigma^*_\epsilon(\bar{X}_{s}-w^H_{s}) e_j\rangle(\sigma^{jk}*L_{s,t})(\bar{X}_{s})].
\]
Indeed, $Z^{j, \epsilon}$ admits a sewing as
\begin{align*}
    &\delta Z_{s,u,t}^{j, \epsilon}\\
    &=\bar{\mathbb{E}}[\sum_{k=1}^n\langle f_k, \sigma^*_\epsilon(\bar{X}_{s}-w^H_{s}) e_j\rangle\left((\sigma^{jk}*L_{u,t})(\bar{X}_{s})-(\sigma^{jk}*L_{u,t})(\bar{X}_{u})\right)]\\
    &+\bar{\mathbb{E}}[\sum_{k=1}^n\langle f_k, \sigma^*_\epsilon(\bar{X}_{s}-w^H_{s})-\sigma^*_\epsilon(\bar{X}_{u}-w^H_{u}) e_j\rangle(\sigma^{jk}*L_{u,t})(\bar{X}_{s})\\
    &\lesssim |t-s|^{\gamma_0/2+\gamma_1}+\sum_{k=1}^n|t-s|^{H\wedge \gamma_0/2}\norm{\sigma^{jk}*L_{u,t}}_{L^\infty}\\
    &=|t-s|^{\gamma_0/2+\gamma_1}+\sum_{k=1}^n|t-s|^{H\wedge \gamma_0/2}\norm{T^{-w^H}_{u,t}\sigma^{jk}}_{L^\infty}.
\end{align*}
where in the above inequality, we exploited \eqref{continuity of limit process} and again  $T^{-w^H}\sigma^{jk}\in C^{\gamma_1}_tC^{1}_x$, as well as the Lipschity-continuity of $\sigma_\epsilon$ (this is where the mollification is required). Moreover, a competition between the term $\sigma_\epsilon(\bar{X}_s-w^H_s)$ - which is more regular in time, provided $H$ large  - and $\norm{T^{-w^H}_{u,t}\sigma^{jk}}_{L^\infty}$ - which is more regular, provided $H$ small - occurred. By Theorem \ref{regularity of averaging operator I}, we know that 
 \[
 \norm{T^{-w^H}_{u,t}\sigma^{jk}}_{L^\infty}\lesssim |t-u|^{1-\frac{d}{p}H-\eta}
 \]
 for any $\eta>0$, as $\sigma^{jk}$ is time independent. Note that it is at this point that we need to employ Theorem \ref{regularity of averaging operator I} in order to not loose too much time regularity of the averaging operator that would be otherwise problematic in the 'regularity competition' above. This is also the reason the set $\Gamma$ in our main result \ref{main theorem} depends on $\sigma$. As by assumption $d/p<1$, we have that 
 \[
 H+1-d/pH-\eta>1
 \]
 for $\eta>0$ sufficiently small. Moreover, by choosing the maximal $\gamma_0$ available in Lemma~\ref{a priori bound}, namely $\gamma_0=1-Hd/2-\eta$ for any $\eta>0$, we have 
 \[
\frac{1}{2}(1-Hd/2-\eta)+1-Hd/p>1
 \]
which is satisfied under the condition
 \[
2H<(d/4+d/p)^{-1}.
 \]
 Remark however that for $d/p<1$, this condition on the Hurst parameter is already satisfied under the assumptions of Lemma \ref{a priori bound}. Overall, we conclude that $Z^{j, \epsilon}_{s,t}$ admits a sewing. Hence, we have
\[
\lim_{N\to \infty}\sum_i^N\bar{\mathbb{E}}[\sum_{k=1}^n\langle f_k, \sigma^*_\epsilon(\bar{X}_{t_i}-w^H_{t_i}) e_j\rangle(\sigma^{jk}*L_{t_i, t_{i+1}})(\bar{X}_{t_i})]=(\mathcal{I}Z^{j, \epsilon})_{t}. 
\]
Finally, again by a mollification argument on $\sigma$  similar to Lemma \ref{identification} one verifies that 
\[
(\mathcal{I}Z^{j,\epsilon})_{t}=(\mathcal{I}G^{j, \epsilon})_t,
\]
concluding the statement.
\end{proof}

\section{Appendix}
\subsection*{Local time and occupation times formula}
We recall for the reader the basic concepts of occupation measures, local times and the occupation times formula. A comprehensive review paper on these topics is \cite{horowitz}. 
\begin{definition}
Let $w:[0,T]\to \R^d$ be a measurable path. Then the occupation measure at time $t\in [0,T]$, written $\mu^w_t$ is the Borel measure on $\R^d$ defined by 
\[
\mu^w_t(A):=\lambda(\{ s\in [0,t]:\ w_s\in A\}), \quad A\in \mathcal{B}(\R^d),
\]
where $\lambda$ denotes the standard Lebesgue measure. 
\end{definition}
The occupation measure thus measures how much time the process $w$ spends in certain Borel sets. Provided for any $t\in [0,T]$, the measure is absolutely continuous with respect to the Lebesgue measure on $\R^d$, we call the corresponding Radon-Nikodym derivative local time of the process $w$:
\begin{definition}
Let $w:[0,T]\to \R^d$ be a measurable path. Assume that there exists a measurable function $L^w:[0,T]\times \R^d\to \R_+$ such that 
\[
\mu^w_t(A)=\int_A L^w_t(z)dz, 
\]
for any $A\in \mathcal{B}(\R^d)$ and  $t\in [0,T]$. Then we call $L^w$ local time of $w$. 
\end{definition}
Note that by the definition of the occupation  measure, we have for any bounded measurable function $f:\R^d\to \R$ that 
\begin{equation}
    \int_0^tf(w_s)ds=\int_{\R^d} f(z)\mu^w_t(dz).
    \label{occupation times formula}
\end{equation}
The above equation \eqref{occupation times formula} is called occupation times formula. Remark that in particular, provided $w$ admits a local time, we also have for any $x\in \R^d$
\begin{equation}
    \int_0^tf(x-w_s)ds=\int_{\R^d} f(x-z)\mu^w_t(dz)=\int_{\R^d}f(x-z)L^w_t(z)dz=(f*L^w_t)(z).
\end{equation}
\subsection*{The Sewing Lemma and its stochastic version}
We recall the Sewing Lemma due to \cite{gubi} (see also \cite[Lemma 4.2]{frizhairer}) as well as its stochastic version due to \cite{stochasticsewing}. Let $E$ be a Banach space, $[0,T]$ a given interval. Let $\Delta_n$ denote the $n$-th simplex of $[0,T]$, i.e. $\Delta_n:\{(t_1, \dots, t_n)| 0\leq t_1\dots\leq t_n\leq T \} $. For a function $A:\Delta_2\to E$ define the mapping $\delta A: \Delta_3\to E$ via
\[
(\delta A)_{s,u,t}:=A_{s,t}-A_{s,u}-A_{u,t}
\]
Provided $A_{t,t}=0$ we say that for $\alpha, \beta>0$ we have $A\in C^{\alpha, \beta}_2(E)$ if $\norm{A}_{\alpha, \beta}<\infty$ where
\[
\norm{A}_\alpha:=\sup_{(s,t)\in \Delta_2}\frac{\norm{A_{s,t}}_E}{|t-s|^\alpha}, \qquad \norm{\delta A}_{\beta}:=\sup_{(s,u,t)\in \Delta_3}\frac{\norm{(\delta A)_{s,u,t}}_E}{|t-s|^\beta} \qquad \norm{A}_{\alpha, \beta}:=\norm{A}_\alpha+\norm{\delta A}_\beta
\]
For a function $f:[0,T]\to E$, we note $f_{s,t}:=f_t-f_2$

Moreover, if for any sequence $(\mathcal{P}^n([s,t]))_n$ of partitions of $[s,t]$ whose mesh size goes to zero, the quantity 
\[
\lim_{n\to \infty}\sum_{[u,v]\in \mathcal{P}^n([s,t])}A_{u,v}
\]
converges to the same limit, we note
\[
(\mathcal{I} A)_{s,t}:=\lim_{n\to \infty}\sum_{[u,v]\in \mathcal{P}^n([s,t])}A_{u,v}.
\]

\begin{lemma}[Sewing]
Let $0<\alpha\leq 1<\beta$. Then for any $A\in C^{\alpha, \beta}_2(E)$, $(\mathcal{I} A)$ is well defined. Moreover, denoting $(\mathcal{I} A)_t:=(\mathcal{I} A)_{0,t}$, we have $(\mathcal{I} A)\in C^\alpha([0,T], E)$ and $(\mathcal{I} A)_0=0$ and for some constant $c>0$ depending only on $\beta$ we have
\[
\norm{(\mathcal{I} A)_{t}-(\mathcal{I} A)_{s}-A_{s,t}}_{E}\leq c\norm{\delta A}_\beta |t-s|^\beta.
\]
\label{sewing}
\end{lemma}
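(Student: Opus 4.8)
The plan is to construct $\mathcal{I}A$ as the limit of Riemann-type sums and to quantify the error using the hypothesis $\beta>1$. I would first work with dyadic partitions: for $n\ge 0$ let $\mathcal{P}^n$ denote the partition of $[s,t]$ into $2^n$ subintervals of equal length $|t-s|2^{-n}$, and set $A^n_{s,t}:=\sum_{[u,v]\in\mathcal{P}^n}A_{u,v}$, so that $A^0_{s,t}=A_{s,t}$. Passing from level $n$ to level $n+1$ bisects every subinterval, and for an interval $[u,v]$ with midpoint $m$ one has the elementary identity $A_{u,m}+A_{m,v}-A_{u,v}=-(\delta A)_{u,m,v}$. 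Hence $A^{n+1}_{s,t}-A^n_{s,t}=-\sum_{[u,v]\in\mathcal{P}^n}(\delta A)_{u,m_{uv},v}$, where $m_{uv}$ is the midpoint of $[u,v]$.

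The key estimate then follows at once: each of the $2^n$ terms is bounded by $\norm{\delta A}_\beta(|t-s|2^{-n})^\beta$, so that
\[
\norm{A^{n+1}_{s,t}-A^n_{s,t}}_E\leq \norm{\delta A}_\beta|t-s|^\beta\,2^{n(1-\beta)}.
\]
Because $\beta>1$, the series $\sum_{n\ge0}2^{n(1-\beta)}$ converges, so $(A^n_{s,t})_n$ is Cauchy in $E$ and I would define $(\mathcal{I}A)_{s,t}:=\lim_n A^n_{s,t}$. Summing the telescoping bound from $A^0_{s,t}=A_{s,t}$ gives
\[
\norm{(\mathcal{I}A)_{s,t}-A_{s,t}}_E\leq c\,\norm{\delta A}_\beta|t-s|^\beta,\qquad c=(1-2^{1-\beta})^{-1},
\]
with $c$ depending only on $\beta$; this is the asserted remainder bound, once additivity is established.

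The delicate point — and the step I expect to be the main obstacle — is that the statement requires the limit to be independent of the chosen sequence of partitions with vanishing mesh, not merely of the dyadic one. I would handle this by proving a uniform bound for an arbitrary partition $\pi=\{s=u_0<\dots<u_N=t\}$ via a greedy point-removal argument: at each stage one discards an interior point $u_i$ for which $u_{i+1}-u_{i-1}$ is smallest; a telescoping count shows this minimum is $\le 2(t-s)/(N-1)$, and deleting $u_i$ changes the sum by $(\delta A)_{u_{i-1},u_i,u_{i+1}}$, of norm $\le\norm{\delta A}_\beta(2(t-s)/(N-1))^\beta$. Iterating from $N$ intervals down to one and summing the convergent series $\sum_{N\ge2}(N-1)^{-\beta}=\zeta(\beta)$ yields $\norm{A^\pi_{s,t}-A_{s,t}}_E\le C_\beta\norm{\delta A}_\beta|t-s|^\beta$ uniformly in $\pi$. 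Combined with an elementary uniqueness lemma — an additive germ $B$ (i.e.\ $B_{s,t}=B_{s,u}+B_{u,t}$) satisfying $\norm{B_{s,t}}_E\le C|t-s|^\beta$ with $\beta>1$ must vanish, since subdividing $[s,t]$ into $N$ equal pieces forces $\norm{B_{s,t}}_E\le C|t-s|^\beta N^{1-\beta}\to0$ — this shows that all partition sums converge to the same limit and that $(\mathcal{I}A)_{s,t}=(\mathcal{I}A)_{s,u}+(\mathcal{I}A)_{u,t}$.

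Finally I would assemble the statement. Additivity together with $(\mathcal{I}A)_t:=(\mathcal{I}A)_{0,t}$ gives $(\mathcal{I}A)_t-(\mathcal{I}A)_s=(\mathcal{I}A)_{s,t}$, so the key estimate becomes exactly the displayed remainder bound. Hölder regularity follows from
\[
\norm{(\mathcal{I}A)_t-(\mathcal{I}A)_s}_E\leq\norm{A}_\alpha|t-s|^\alpha+c\,\norm{\delta A}_\beta|t-s|^\beta\lesssim|t-s|^\alpha,
\]
where the second term is absorbed using $|t-s|^{\beta-\alpha}\le T^{\beta-\alpha}$ since $\beta>1\ge\alpha$; thus $\mathcal{I}A\in C^\alpha([0,T],E)$. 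Lastly $(\mathcal{I}A)_0=(\mathcal{I}A)_{0,0}=0$, because every partition sum over a degenerate interval vanishes thanks to $A_{t,t}=0$.
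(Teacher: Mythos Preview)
The paper does not provide its own proof of this lemma; it is stated in the Appendix as a recalled result from \cite{gubi} and \cite[Lemma~4.2]{frizhairer}. Your argument is correct and is essentially the standard proof found in those references: dyadic refinement and the telescoping bound $\norm{A^{n+1}_{s,t}-A^n_{s,t}}_E\le\norm{\delta A}_\beta|t-s|^\beta 2^{n(1-\beta)}$ for existence, and the greedy point-removal argument for the uniform estimate over arbitrary partitions. One small remark: your passage from the uniform bound to partition-independence via the ``additive germ of order $>1$ vanishes'' lemma is compressed---you implicitly need additivity of the candidate limits before you can subtract them, which does not follow immediately from the dyadic construction alone. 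The cleanest way to close this (and what \cite{frizhairer} actually does) is to compare any two partitions $\pi_1,\pi_2$ via their common refinement $\pi_1\vee\pi_2$ and apply your uniform bound on each subinterval of $\pi_1$ (respectively $\pi_2$) to get $\norm{A^{\pi_1}_{s,t}-A^{\pi_1\vee\pi_2}_{s,t}}_E\lesssim |\pi_1|^{\beta-1}|t-s|$, which yields the Cauchy property directly and gives additivity as a by-product.
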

We moreover cite the stochastic version of the Sewing Lemma due to \cite{stochasticsewing}:
\begin{lemma}[Stochastic Sewing]
Let $(\Omega, \mathcal{F}, \mathbb{P}, (\mathcal{F}_t)_{t\in [0,T]}$ be a complete filtered probability space. Let $p\geq 2$ and let $A:\Delta_2\to \R^d$ be such that for any $s\leq t$, $A_{s,t}$ is $\mathcal{F}_t$ measurable and $A_{s,t}\in L^p(\Omega)$. Suppose moreover that for some $\epsilon_1, \epsilon_2>0$, we have
\begin{equation}
    \begin{split}
        \norm{\mathbb{E}[(\delta A)_{s,u,t}|\mathcal{F}_s]}_{L^p(\Omega)}&\leq\Gamma_1 (t-s)^{1+\epsilon_1}\\
        \norm{(\delta A)_{s,u,t}}_{L^p(\Omega)}&\leq \Gamma_2 (t-s)^{1/2+\epsilon_2}.
        \label{stoch sewing conditions}
    \end{split}
\end{equation}
Then there exists a unique (up to a modification) $(\mathcal{F}_t)_{t\in [0,T]}$-adapted and $L^p(\Omega)$ integrable stochastic process $(\mathcal{A}_t)_{t\in [0,T]}$ with values in $\R^d$ such that $\mathcal{A}_0=0$ and such that 
\begin{equation}
\begin{split}
\label{stoch sewing a priori}
    \norm{\mathcal{A}_{s,t}-A_{s,t}}_{L^p(\Omega)}&\lesssim \Gamma_1 (t-s)^{1+\epsilon_1}+\Gamma_2(t-s)^{1/2+\epsilon_2}\\
    \norm{\mathbb{E}[\mathcal{A}_{s,t}-A_{s,t}|\mathcal{F}_s]}_{L^p(\Omega)}&\lesssim \Gamma_2|t-s|^{1+\epsilon_1}.
\end{split}
\end{equation}
Finally, for any sequence $(\mathcal{P}^n([s,t]))_n$ of  partitions of $[s,t]$, the sequence $(\mathcal{A}^n_{s,t})_n$ defined by 
\[
\mathcal{A}^n_{s,t}:=\sum_{[u,v]\in \mathcal{P}^n([s,t])} A_{u,v}
\]
converges in $L^p(\Omega)$ to $\mathcal{A}_{s,t}$.
\end{lemma}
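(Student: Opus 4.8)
The final statement is Lê's stochastic sewing lemma, so the plan is to reproduce its proof while isolating the two places where the stochastic structure is essential. For fixed $0 \leq s < t \leq T$, I would work with the uniform dyadic partitions: let $\mathcal{P}^n$ split $[s,t]$ into $2^n$ equal intervals and set $\mathcal{A}^n_{s,t} := \sum_{[u,v]\in\mathcal{P}^n} A_{u,v}$. The goal is to show $(\mathcal{A}^n_{s,t})_n$ is Cauchy in $L^p(\Omega)$ and to \emph{define} $\mathcal{A}_{s,t}$ as its limit. Refining $\mathcal{P}^n$ to $\mathcal{P}^{n+1}$ inserts the midpoint $m$ of each $[u,v]\in\mathcal{P}^n$, which gives the clean increment identity
\[
\mathcal{A}^{n+1}_{s,t} - \mathcal{A}^n_{s,t} = -\sum_{[u,v]\in\mathcal{P}^n} (\delta A)_{u,m,v}.
\]
So the entire problem reduces to estimating a sum of $\delta A$-terms indexed by a partition, and then summing a geometric series in $n$.

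The decisive step is a martingale decomposition of each summand: I would write $(\delta A)_{u,m,v} = \mathbb{E}[(\delta A)_{u,m,v}\mid\mathcal{F}_u] + \xi_{u,v}$, where $\xi_{u,v} := (\delta A)_{u,m,v} - \mathbb{E}[(\delta A)_{u,m,v}\mid\mathcal{F}_u]$ is centered given $\mathcal{F}_u$. For the conditional-expectation parts, the triangle inequality together with the first hypothesis yields a bound $\lesssim 2^n \Gamma_1 (2^{-n}(t-s))^{1+\epsilon_1} = \Gamma_1 (t-s)^{1+\epsilon_1} 2^{-n\epsilon_1}$, which is summable since $\epsilon_1 > 0$. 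For the centered parts I would exploit that, because $A_{u,v}$ is $\mathcal{F}_v$-measurable, the family $(\xi_{u,v})_{[u,v]\in\mathcal{P}^n}$ ordered from left to right is a \emph{martingale difference sequence} with respect to the filtration at the right endpoints (note $v_{k-1}=u_k$, so $\mathbb{E}[\xi_{u_k,v_k}\mid\mathcal{F}_{v_{k-1}}] = \mathbb{E}[\xi_{u_k,v_k}\mid\mathcal{F}_{u_k}] = 0$). Hence, for $p\geq 2$, the $L^p$ Burkholder–Davis–Gundy inequality gives
\[
\Big\| \sum_{[u,v]} \xi_{u,v}\Big\|_{L^p}^2 \lesssim \sum_{[u,v]} \|\xi_{u,v}\|_{L^p}^2 \lesssim 2^n \big(\Gamma_2 (2^{-n}(t-s))^{1/2+\epsilon_2}\big)^2,
\]
that is $\lesssim \Gamma_2 (t-s)^{1/2+\epsilon_2} 2^{-n\epsilon_2}$, again summable. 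This cancellation is precisely the gain over the deterministic Sewing Lemma: the merely $1/2^+$-Hölder bound on $\delta A$ suffices because the centered increments average out.

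Summing both estimates over $n$ shows $(\mathcal{A}^n_{s,t})_n$ is Cauchy; I would define $\mathcal{A}_{s,t}$ as its $L^p$-limit. Telescoping the same geometric bounds from level $0$ produces the first a priori estimate, while conditioning on $\mathcal{F}_s$ annihilates the martingale part and leaves only the $\Gamma_1$-contribution, giving the second. Additivity $\mathcal{A}_{s,t} = \mathcal{A}_{s,u} + \mathcal{A}_{u,t}$ and $\mathcal{F}_t$-adaptedness follow by passing to the limit in the corresponding properties of the dyadic sums, and uniqueness follows because the difference of two candidates is an additive, adapted process whose increments obey the two a priori bounds, which the same conditional-expectation-plus-BDG decomposition forces to vanish along refining partitions; convergence for an arbitrary partition sequence of vanishing mesh is then obtained by comparing it to the dyadic scheme through the identical two-part splitting. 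I expect the main obstacle to be exactly the martingale-difference step: one must verify rigorously that the centered increments form a genuine martingale difference sequence for the right-endpoint filtration — which is where the adaptedness hypothesis $A_{s,t}\in\mathcal{F}_t$ is used — and then apply BDG with careful bookkeeping of the exponents so that the $1/2+\epsilon_2$ decay survives summation. The remaining arguments are routine telescoping and geometric-series estimates.
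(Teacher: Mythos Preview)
The paper does not prove this lemma at all: it is stated in the Appendix with the preamble ``We moreover cite the stochastic version of the Sewing Lemma due to \cite{stochasticsewing}'', and no argument is given. Your proposal is therefore not being compared against anything in the paper; what you have written is precisely the standard proof from L\^e's original article, and the outline is correct in all essential points --- the dyadic refinement identity, the splitting of each $(\delta A)_{u,m,v}$ into its $\mathcal{F}_u$-conditional expectation and a martingale-difference remainder, the triangle inequality on the first part and BDG (with the $p\geq 2$ Minkowski step) on the second, and the geometric summation in $n$. The adaptedness check for the martingale-difference sequence and the passage from dyadic to arbitrary partitions of vanishing mesh are exactly the two places that require care, and you have identified them correctly.
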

Let us finally cite a result allowing to commute limits and sewings. 
\begin{lemma}[Lemma A.2 \cite{Galeati2021}]
\label{sewing convergence}
For $0<\alpha\leq 1<\beta$ and $E$ a Banach space, let  $A\in C^{\alpha, \beta}_2(E)$ and $ (A^n)_n\subset C^{\alpha, \beta}_2(E)$ such that for some $R>0$ $\sup_{n\in \mathbb{N}}\norm{\delta A^n}_\beta\leq R$ and such that $\norm{A^n-A}_\alpha\to 0$. Then \[\norm{\mathcal{I}(A-A^n)}_\alpha\to 0.\]
Moreover, adapting the proof in \cite{Galeati2021} in conjunction with the a priori bounds \eqref{stoch sewing a priori}, the result canonically extends to the Stochastic Sewing setting: Let $(\bar{\Omega}, \bar{\mathcal{F}}, \bar{\mathbb{P}} ,(\bar{\mathcal{F}}_t)_t)$ be a complete filtered probability space. Let $(A^n)_n$ be a sequence of  $\bar{\mathcal{F}}_t$ adapted processes $A^n:\Delta_2\to \R^d$ such that $A_{s,t}\in L^p(\bar{\Omega})$. Suppose that the inequalities \eqref{stoch sewing conditions} hold uniformly in $n\in \mathbb{N}$ and suppose that $\norm{A^n}_{C^\gamma_tL^p(\bar{\Omega})}\to 0$ for some $\gamma\in (0,1)$. Then, if $\mathcal{A}^n$ denotes the stochastic sewing of $A^n$, we have that $\norm{\mathcal{A}^n}_{C^\gamma_tL^p(\bar{\Omega})}\to 0$.
\end{lemma}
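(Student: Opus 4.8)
The plan is to read both statements as instances of the stochastic version of Lemma~\ref{sewing convergence}, applied to the germ sequences $a^{i,j,\epsilon}$ and $A^{j,\epsilon}$ (indexed by $\epsilon\to 0$) converging to $a^{i,j}$ and $A^{j}$. To invoke that lemma for, say, the family $a^{i,j,\epsilon}$, I must produce a uniform-in-$\epsilon$ bound on the increments $\delta a^{i,j,\epsilon}$ of exponent strictly above $1$, together with the $\alpha$-norm convergence $\norm{a^{i,j,\epsilon}-a^{i,j}}_{C^{\gamma}_t L^m(\bar\Omega)}\to 0$ for some $\gamma\in(0,1)$; the conclusion is then $\norm{\mathcal I(a^{i,j,\epsilon}-a^{i,j})}_{C^\gamma_t L^m(\bar\Omega)}\to 0$, which yields the pointwise-in-$(s,t)$ claim. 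First I would check that all germs admit sewings. Using additivity of the local time, $(\delta a^{i,j,\epsilon})_{s,u,t}=(\sigma^{ji}_\epsilon*L_{u,t})(\bar X^\epsilon_s)-(\sigma^{ji}_\epsilon*L_{u,t})(\bar X^\epsilon_u)$, so that the $C^1_x$-regularity $\norm{T^{-w^H}_{u,t}\sigma^{ji}_\epsilon}_{C^1_x}\lesssim\norm{\sigma}_{L^p_x}|t-u|^{\gamma_1}$ from Theorem~\ref{regularity of averaging operator}, the H\"older bound on $|\bar X^\epsilon_u-\bar X^\epsilon_s|$, and the moment estimate \eqref{a priori for vitali} give $\norm{(\delta a^{i,j,\epsilon})_{s,u,t}}_{L^m(\bar\Omega)}\lesssim\norm{\sigma}_{L^p_x}|t-s|^{\gamma_1+\gamma_0/2}$ uniformly in $\epsilon$, with $\gamma_1+\gamma_0/2>1$ by the Hurst condition of Lemma~\ref{a priori bound}. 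Since a raw increment bound of exponent $>1$ dominates both stochastic sewing conditions \eqref{stoch sewing conditions} (by Jensen for the conditional one), this simultaneously provides the uniform bound required by Lemma~\ref{sewing convergence}; the analogous estimate for $a^{i,j}$, using \eqref{integralbility hoelder constant} in place of \eqref{a priori for vitali}, shows that $\mathcal I a^{i,j}$ is itself well defined.

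For the $\alpha$-norm convergence I would insert the intermediate germ $(\sigma^{ji}*L_{s,t})(\bar X^\epsilon_s)$ and split $a^{i,j}-a^{i,j,\epsilon}$ into a change of base point and a change of coefficient. The base-point difference $(\sigma^{ji}*L_{s,t})(\bar X_s)-(\sigma^{ji}*L_{s,t})(\bar X^\epsilon_s)$ is controlled by $\norm{T^{-w^H}_{s,t}\sigma^{ji}}_{C^1_x}\,|\bar X_s-\bar X^\epsilon_s|\lesssim\norm{\sigma}_{L^p_x}|t-s|^{\gamma_1}|\bar X_s-\bar X^\epsilon_s|$, and here I would upgrade the almost-sure Skorokhod convergence $\bar X^\epsilon\to\bar X$ to convergence in $L^m(\bar\Omega;C([0,T],\R^d))$ by combining the uniform moment bound \eqref{a priori for vitali} with Vitali's theorem, forcing this term to $0$. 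The coefficient difference $((\sigma^{ji}-\sigma^{ji}_\epsilon)*L_{s,t})(\bar X^\epsilon_s)$ is bounded in $L^\infty_x$ by the pathwise stability estimate \eqref{stability}, yielding $\lesssim\norm{\sigma^{ji}-\sigma^{ji}_\epsilon}_{L^p_x}|t-s|^{\gamma_0/2}\to 0$ since $\sigma_\epsilon\to\sigma$ in $L^p_x$. It is essential here that the exceptional set in Theorem~\ref{regularity of averaging operator} is independent of the integrand, because $\bar X^\epsilon$ lives on the abstract Skorokhod basis where measurability in $\omega^H$ has been lost, so the moment-based stability \eqref{stabilityII} would be inadmissible.

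The argument for $A^{j,\epsilon}\to A^{j}$ is identical once one observes that $\sigma_\epsilon\to\sigma$ in $L^p_x$ forces $|\sigma^*_\epsilon e_j|^2\to|\sigma^* e_j|^2$ in $L^{p/2}_x$, since by H\"older $\norm{|\sigma^*_\epsilon e_j|^2-|\sigma^* e_j|^2}_{L^{p/2}_x}\lesssim\norm{\sigma_\epsilon-\sigma}_{L^p_x}(\norm{\sigma_\epsilon}_{L^p_x}+\norm{\sigma}_{L^p_x})$, so that \eqref{stability} applies at integrability $p/2$ exactly as in Lemma~\ref{a priori bound}. The main obstacle I anticipate is handling the simultaneous perturbation of the base point $\bar X^\epsilon$ and of the coefficient $\sigma_\epsilon$ while keeping all bounds uniform in $\epsilon$: the triangle-inequality split must freeze one object while varying the other, and the base-point term is what forces the passage from almost-sure to $L^m$-convergence of $\bar X^\epsilon$, making the a priori moment bound \eqref{a priori for vitali} indispensable. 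Everything else reduces to routine applications of Theorem~\ref{regularity of averaging operator} and of the stochastic sewing-convergence Lemma~\ref{sewing convergence}.
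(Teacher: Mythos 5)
Your proposal does not prove the statement at hand: it proves (in substance) Lemma \ref{step one conv}. The lemma you were asked to establish is the \emph{abstract} stability result for the sewing map --- given $\sup_n\norm{\delta A^n}_\beta\leq R$ and $\norm{A^n-A}_\alpha\to 0$, conclude $\norm{\mathcal{I}(A-A^n)}_\alpha\to 0$, together with its stochastic analogue --- whereas your argument takes this result as a black box (``the conclusion is then\dots'', ``routine applications of \dots the stochastic sewing-convergence Lemma~\ref{sewing convergence}'') and spends all its effort verifying the lemma's \emph{hypotheses} for the particular germs $a^{i,j,\epsilon}$ and $A^{j,\epsilon}$. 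As a proof of Lemma \ref{sewing convergence} this is circular. Those verifications (the triangle split into a base-point change and a coefficient change, the Vitali upgrade of $\bar X^\epsilon\to\bar X$ to $L^m(\bar\Omega;C([0,T],\R^d))$, the use of the pathwise stability \eqref{stability} and the remark on the exceptional set, the H\"older bound $\norm{|\sigma^*_\epsilon e_j|^2-|\sigma^*e_j|^2}_{L^{p/2}_x}\lesssim\norm{\sigma_\epsilon-\sigma}_{L^p_x}$) are correct and coincide with the paper's proof of Lemma \ref{step one conv}, but they belong there, not here.

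The missing idea is an interpolation between the two norms, which is the core of \cite[Lemma A.2]{Galeati2021}. Set $B^n:=A-A^n$; the difficulty is that $\norm{\delta B^n}_\beta\leq\norm{\delta A}_\beta+R$ is merely \emph{bounded}, not small, so the sewing estimate $\norm{(\mathcal{I}B^n)_{s,t}-B^n_{s,t}}_E\leq c\norm{\delta B^n}_\beta|t-s|^\beta$ alone does not yield convergence. One instead observes the trivial bound $\norm{(\delta B^n)_{s,u,t}}_E\leq 3\norm{B^n}_\alpha|t-s|^\alpha$ and interpolates: for $\theta\in(0,1)$ and $\beta':=\theta\beta+(1-\theta)\alpha$, one has $\norm{\delta B^n}_{\beta'}\lesssim\norm{\delta B^n}_\beta^\theta\,\norm{B^n}_\alpha^{1-\theta}\to 0$, and since $\beta>1\geq\alpha$ one may pick $\theta$ close to $1$ so that $\beta'>1$; then Lemma \ref{sewing} applied at exponent $\beta'$ gives $\norm{\mathcal{I}B^n}_\alpha\leq\norm{B^n}_\alpha+cT^{\beta'-\alpha}\norm{\delta B^n}_{\beta'}\to 0$. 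The stochastic extension runs the same interpolation on both conditions in \eqref{stoch sewing conditions}: from $\norm{(\delta A^n)_{s,u,t}}_{L^p(\bar\Omega)}\leq 3\norm{A^n}_{C^\gamma_tL^p(\bar\Omega)}|t-s|^\gamma$ (and conditional Jensen for the $\mathcal{F}_s$-conditioned bound) one obtains constants $\Gamma_1^{(n)},\Gamma_2^{(n)}\lesssim\norm{A^n}_{C^\gamma_tL^p(\bar\Omega)}^{1-\theta}\to 0$ at slightly reduced exponents still strictly above $1$ and $1/2$ respectively, and then the a priori bounds \eqref{stoch sewing a priori} combined with $\norm{\mathcal{A}^n_{s,t}}_{L^p(\bar\Omega)}\leq\norm{A^n_{s,t}}_{L^p(\bar\Omega)}+\norm{\mathcal{A}^n_{s,t}-A^n_{s,t}}_{L^p(\bar\Omega)}$ give $\norm{\mathcal{A}^n}_{C^\gamma_tL^p(\bar\Omega)}\to 0$. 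None of this interpolation mechanism appears in your write-up, and without it the statement you were assigned remains unproved.
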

\subsection*{Identifications of Riemann integrals with sewings}
In the following, we establish the identity of Riemann integrals with corresponding sewings in our setting for a particular example. Other identifications follow similarly and their proofs are thus omitted.
\begin{lemma}[Identification]
\label{identification}
Let $\sigma_\epsilon: \R^d\to \mathbb{M}^{d\times n}$ be smooth and bounded. Let $X$ be a stochastic process satisfying 
\[
\sup_{s\neq t\in [0,T]}\frac{\mathbb{E}[|X_{s,t}|^m]}{|t-s|^{m\gamma/2}}<\infty
\]
for some $\gamma>1/2$. Moreover suppose $2H<(1+\frac{d}{(p/2)\wedge (4/3)})^{-1}$. Then $\mathbb{P}^H$ almost surely, the germ
\[
A_{s,t}^\epsilon=(\norm{\sigma_\epsilon}_{HS}^2*L_{s,t})(X_s)
\]
admits a stochastic sewing $\mathcal{A}^\epsilon$ and moreover, we have
\[
 \norm{\mathcal{A}^\epsilon_{0,t}-\left(\int_0^t \norm{\sigma_\epsilon( X_r-w^H_r)}_{HS}^2dr\right)}_{L^{m/2}(\Omega)}^{m/2}=0.
\]
\end{lemma}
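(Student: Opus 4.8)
The plan is to split the argument into two parts: showing that the germ $A^\epsilon$ admits a stochastic sewing $\mathcal{A}^\epsilon$, and then identifying $\mathcal{A}^\epsilon_{0,t}$ with the Lebesgue integral by realizing both as the common $L^{m/2}$-limit of Riemann sums along refining partitions.

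Writing $g:=\norm{\sigma_\epsilon}_{HS}^2$, which lies in $L^{p/2}_x$ and, by smoothness and boundedness of $\sigma_\epsilon$, is globally Lipschitz, the existence of the sewing proceeds exactly as in Lemma~\ref{a priori bound}. Working on the full-measure event where Theorem~\ref{regularity of averaging operator} applies, the time-additivity of the averaging operator gives $(\delta A^\epsilon)_{s,u,t}=(T^{-w^H}_{u,t}g)(X_s)-(T^{-w^H}_{u,t}g)(X_u)$, so the bound $\norm{T^{-w^H}_{u,t}g}_{C^1_x}\lesssim |t-u|^{\gamma_1}$ combined with the assumed moment estimate on $X$ yields $\norm{(\delta A^\epsilon)_{s,u,t}}_{L^{m/2}}\lesssim |t-u|^{\gamma_1}\norm{X_u-X_s}_{L^{m/2}}\lesssim |t-s|^{\gamma_1+\gamma/2}$. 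Since the hypothesis on $H$ permits choosing $\gamma_1$ with $\gamma_1+\gamma/2>1$ (as in Lemma~\ref{a priori bound}), and since conditional expectation is an $L^{m/2}$-contraction, both inequalities in \eqref{stoch sewing conditions} hold with integrability exponent $m/2\geq 2$; the stochastic sewing lemma then produces $\mathcal{A}^\epsilon$.

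For the identification, fix $t$ and a sequence of partitions $\mathcal{P}^n$ of $[0,t]$ with vanishing mesh. By the stochastic sewing lemma $\sum_{[u,v]\in\mathcal{P}^n}A^\epsilon_{u,v}\to\mathcal{A}^\epsilon_{0,t}$ in $L^{m/2}$, so it suffices to show these sums also converge to $I_t:=\int_0^t g(X_r-w^H_r)\dd r$. The occupation times formula gives $A^\epsilon_{u,v}=\int_u^v g(X_u-w^H_r)\dd r$, whence
\[
I_t-\sum_{[u,v]\in\mathcal{P}^n}A^\epsilon_{u,v}=\sum_{[u,v]\in\mathcal{P}^n}\int_u^v\big(g(X_r-w^H_r)-g(X_u-w^H_r)\big)\dd r.
\]
Using the Lipschitz bound $|g(X_r-w^H_r)-g(X_u-w^H_r)|\lesssim_\epsilon|X_r-X_u|$, Minkowski's inequality, and $\norm{X_r-X_u}_{L^{m/2}}\lesssim|r-u|^{\gamma/2}$, I obtain
\[
\norm{I_t-\sum_{[u,v]\in\mathcal{P}^n}A^\epsilon_{u,v}}_{L^{m/2}}\lesssim_\epsilon\sum_{[u,v]\in\mathcal{P}^n}|v-u|^{1+\gamma/2}\leq |\mathcal{P}^n|^{\gamma/2}\,t\to 0,
\]
since $\gamma>1/2$. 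Uniqueness of the $L^{m/2}$-limit forces $\mathcal{A}^\epsilon_{0,t}=I_t$ almost surely, which is the claimed identity.

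I do not expect a conceptual obstacle here, since both ingredients are available from the preceding results. The most delicate point is the same parameter competition as in Lemma~\ref{a priori bound}: one must check that, for the prescribed $\gamma>1/2$, the admissible time-H\"older exponent $\gamma_1$ of the averaging operator---which is constrained by $H$ through Theorem~\ref{regularity of averaging operator}---can be taken large enough that $\gamma_1+\gamma/2>1$, so that the stochastic sewing lemma is applicable. The comparison step is then routine, its one genuinely essential input being the global Lipschitz continuity of $g$ supplied by the smoothness of $\sigma_\epsilon$; this is exactly the feature absent in the singular setting, explaining why the frozen germ and the honest integral need no longer coincide there.
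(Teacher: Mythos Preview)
Your argument is correct. The sewing-existence half is identical to the paper's; the identification half differs in packaging. The paper introduces two auxiliary germs $B^s_{u,v}=g(X_s-w^H_u)(v-u)$ and $C_{u,v}=g(X_u-w^H_u)(v-u)$, observes that their sewings are respectively $A^\epsilon_{s,t}$ (by the occupation times formula) and the Lebesgue integral, and that $B^s_{s,t}=C_{s,t}$; the triangle inequality plus the local a~priori bound from the sewing lemma then shows $\|\int_s^t\dots-\mathcal{A}^\epsilon_{s,t}\|_{L^{m/2}}=O(|t-s|^{1+\delta})$, whence the difference is constant. You instead use the Riemann-sum convergence clause of the stochastic sewing lemma and compare the sums directly to the Lebesgue integral via the Lipschitz bound on $g$. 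Your route is more elementary---no auxiliary germs, no constancy argument---while the paper's yields the slightly stronger local increment estimate for free; both ultimately hinge on the same regularity of $g$.

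One small imprecision: ``smooth and bounded'' does not by itself force $g=\|\sigma_\epsilon\|_{HS}^2$ to be globally Lipschitz. In the paper's setting $\sigma_\epsilon$ is a mollified cut-off and hence compactly supported, so the implication holds; but as the lemma is stated you should either assume bounded derivatives or note that a H\"older bound on $g$ (which is all you actually use) suffices. The paper's proof silently relies on the same regularity.
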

\begin{proof}
As seen in the above proof to Lemma \ref{a priori bound}, the germ $A^\epsilon$ does admit a stochastic sewing. Moreover, it can be easily seen that for fixed $s\in [0,T]$, the germ $B^s_{u,v}=\norm{\sigma_\epsilon(u, X_s-w^H_u)}_{HS}^2(v-u)$ admits a stochastic sewing, for which we have by definition
\[
(\mathcal{I}B^s)_{s,t}=A^\epsilon_{s,t}
\]
understood as an equality in $L^{m/2}$. Similarly, the germ $C_{u,v}=\norm{\sigma_\epsilon(u, X_u-w^H_u)}_{HS}^2(v-u)$ admits a stochastic sewing, for which by definition, we have
\[
(\mathcal{I}C)_{s,t}=\int_s^t \norm{\sigma_\epsilon(u, X_r-w^H_r)}_{HS}^2dr
\]
and moreover $B^s_{s,t}=C_{s,t}$. We therefore conclude that 
\begin{align*}
    &\norm{ \int_s^t \norm{\sigma_\epsilon(r, X_r-w^H_r)}_{HS}^2dr-\mathcal{A}_{s,t}^\epsilon}_{L^{m/2}(\Omega)}\\
    &\lesssim \norm{(\mathcal{I}C)_{s,t}-C_{s,t}}_{L^{m/2}(\Omega)}+\norm{B^s_{s,t}-(\mathcal{I}B^s)_{s,t}}_{L^{m/2}(\Omega)}+\norm{A^\epsilon_{s,t}-\mathcal{A}^\epsilon_{s,t}}_{L^{m/2}(\Omega)}\\
    &=O(|t-s|^{1+\delta}).
\end{align*}
Hence, the function
\[
t\to \norm{ \int_0^t \norm{\sigma_\epsilon(r, X_r-w^H_r)}_{HS}^2dr-\mathcal{A}_{t}^\epsilon}_{L^{m/2}(\Omega)}
\]
is a constant allowing to conclude. 
\end{proof}

\subsection*{Extending the Ito integral in the presence of a perturbative fractional Brownian motion}
In the following, we demonstrate how to extend the definition of the stochastic integral in the presence of a perturbative fractional Brownian motion, i.e. we define the stochastic integral
\[
\int_0^t \sigma(X_r-w^H_r)dB_r
\]
for $\sigma\in L^p_x$. Note that for progressively measurable processes $X$ such that \[
\sup_{s\neq t\in [0,T]}\frac{\mathbb{E}[|X_{s,t}|^m]}{|t-s|^{m\gamma_0/2}}<\infty
\]
it is a priori not obvious why this stochastic integral should be well defined as the integrand is not an element of $L^2(\Omega\times [0,T])$. However, note that due to the previous Lemma \ref{identification}, we can pass by mollifications of $\sigma$ and subsequently exploit the regularizing effect due to the associated averaging operator on the level of the Ito isometry. 
\begin{lemma}
Let $\sigma\in L^p_x$ and $\sigma_\epsilon$ be any cut-off mollification. Let $X$ and $\mathcal{A}^\epsilon$ be as in Lemma \ref{identification}  above. There hold the robustified Ito isometry
\begin{equation}
    \mathbb{E}\left[ \left( \int_0^t \sigma_\epsilon( X_r-w^H_r)dB_r\right)^2\right]=\norm{\mathcal{A}^\epsilon_t}_{L^1(\Omega)}
    \label{ito iso}
\end{equation}
and for $m\geq 4$ the Burkholder-Davis-Gundy inequality
\begin{equation}
    \mathbb{E}\left[ \sup_{t\in [0,T]}\left( \int_0^t \sigma_\epsilon( X_r-w^H_r)dB_r\right)^m\right]\lesssim\norm{\mathcal{A}^\epsilon_T}_{L^{m/2}(\Omega)} 
    \label{BDGII}
\end{equation}
$\mathbb{P}^H$-almost surely. In particular, the sequence $\left( \int_0^{(\cdot)} \sigma_\epsilon( X_r-w^H_r)dB_r\right)_\epsilon$ is $\mathbb{P}^H$-almost surely Cauchy in $L^{m/2}(\Omega, C([0,T]))$, whose limit we denote by 
\[
t\to I_t\sigma=\int_0^t \sigma(X_r-w^H_r)dB_r.
\]
The construction is independent of the cut-off mollification chosen and adapted to the filtration generated by $B$. It is a martingale with respect to that filtration.  Moreover the so constructed integral is linear in the sense that for $\sigma_1, \sigma_2$, we have
\[
I_t(\sigma_1+\sigma_2)=I_t\sigma_1+I_t\sigma_2.
\]
\label{extending the integral}
\end{lemma}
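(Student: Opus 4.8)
The plan is to carry out all the analysis at the level of each smooth truncation $\sigma_\epsilon$, where classical Itô calculus applies, and then to transfer the resulting bounds onto the averaging operator, whose pathwise stability \eqref{stability} supplies the uniformity required to pass to the limit. First I would observe that, $\sigma_\epsilon$ being smooth and bounded, the integrand $r\mapsto\sigma_\epsilon(X_r-w^H_r)$ is progressively measurable and lies in $L^2(\Omega\times[0,T])$, so that $\int_0^t\sigma_\epsilon(X_r-w^H_r)dB_r$ is a genuine (vector-valued) Itô integral. The identities \eqref{ito iso} and \eqref{BDGII} then follow immediately from the classical Itô isometry and the classical Burkholder--Davis--Gundy inequality, whose right-hand sides feature the bracket $\int_0^t\norm{\sigma_\epsilon(X_r-w^H_r)}_{HS}^2dr$; by Lemma \ref{identification} this bracket equals the sewing $\mathcal{A}^\epsilon_t$ in $L^{m/2}(\Omega)$, which is exactly the content of the two displays.

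The heart of the matter is the Cauchy property. Using linearity of both the truncation-mollification $\sigma\mapsto\sigma_\epsilon$ and the classical Itô integral, I would write the difference of two approximations as $\int_0^t(\sigma_\epsilon-\sigma_{\epsilon'})(X_r-w^H_r)dB_r$ and apply \eqref{BDGII} with $\sigma_\epsilon-\sigma_{\epsilon'}$ in place of $\sigma_\epsilon$. This reduces everything to showing that $\mathcal{A}^{\epsilon,\epsilon'}_T\to0$ in $L^{m/2}(\Omega)$, where $\mathcal{A}^{\epsilon,\epsilon'}$ is the stochastic sewing of the germ $(\norm{\sigma_\epsilon-\sigma_{\epsilon'}}_{HS}^2*L_{s,t})(X_s)$. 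I would obtain the germ bounds exactly as in the proof of Lemma \ref{a priori bound}: the stability estimate \eqref{stability} controls $\norm{T^{-w^H}_{s,t}\norm{\sigma_\epsilon-\sigma_{\epsilon'}}_{HS}^2}_{C^\delta_x}$ and $\norm{T^{-w^H}_{s,t}\norm{\sigma_\epsilon-\sigma_{\epsilon'}}_{HS}^2}_{C^{1+\delta}_x}$ by $|t-s|^{\gamma_0}$ respectively $|t-s|^{\gamma_1}$ times $\norm{\,\norm{\sigma_\epsilon-\sigma_{\epsilon'}}_{HS}^2\,}_{L^{p/2}_x}$, with $\gamma_0/2+\gamma_1>1$. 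The elementary but decisive observation is that $\norm{\,\norm{\sigma_\epsilon-\sigma_{\epsilon'}}_{HS}^2\,}_{L^{p/2}_x}=\norm{\sigma_\epsilon-\sigma_{\epsilon'}}_{L^p_x}^2$, so that feeding these germ bounds into the a priori stochastic sewing estimate \eqref{stoch sewing a priori}, together with the Hölder moment bound on $X$, yields $\norm{\mathcal{A}^{\epsilon,\epsilon'}_T}_{L^{m/2}(\Omega)}\lesssim\norm{\sigma_\epsilon-\sigma_{\epsilon'}}_{L^p_x}^2\to0$, since $(\sigma_\epsilon)_\epsilon$ is Cauchy in $L^p_x$. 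Hence the approximating integrals are Cauchy in $L^m(\Omega,C([0,T]))$, a fortiori in $L^{m/2}(\Omega,C([0,T]))$, and I would denote the limit by $I\sigma$.

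It then remains to record the structural properties, all of which pass to the limit. Independence of the mollification follows by applying the very same difference estimate to two cut-off mollifications of the single $\sigma$, both converging to $\sigma$ in $L^p_x$. Adaptedness is preserved under $L^{m/2}$-limits, and the martingale property passes to the limit because conditional expectation is an $L^{m/2}$-contraction for $m/2\geq1$, so the identity $\mathbb{E}[I_t\sigma\,|\,\mathcal{F}_s]=I_s\sigma$ is inherited from the approximations. Linearity follows from the linearity of both $\sigma\mapsto\sigma_\epsilon$ and the classical integral, once more by passing to the limit.

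I expect the main obstacle to be the Cauchy property, and within it the delicate ingredient is the use of the \emph{pathwise} stability of Theorem \ref{regularity of averaging operator}, whose exceptional set is independent of the function being averaged. Only this function-independence guarantees that the germ bounds for \emph{every} difference $\sigma_\epsilon-\sigma_{\epsilon'}$, and indeed for the whole mollifying family simultaneously, hold on a single full-measure set $\Gamma\subset\Omega^H$; relying instead on the moment stability of Theorem \ref{regularity of averaging operator I}, whose null set depends on the function, would forfeit this simultaneous control.
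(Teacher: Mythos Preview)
Your proposal is correct and follows essentially the same route as the paper: classical It\^o isometry/BDG plus Lemma~\ref{identification} for \eqref{ito iso}--\eqref{BDGII}, then the Cauchy property by applying BDG to the difference and bounding the stochastic sewing $\mathcal{A}^{\epsilon,\epsilon'}$ of the germ $(\norm{\sigma_\epsilon-\sigma_{\epsilon'}}_{HS}^2*L_{s,t})(X_s)$ via the pathwise stability of Theorem~\ref{regularity of averaging operator} and the a~priori estimate \eqref{stoch sewing a priori}. Two small remarks: your bound $\norm{\mathcal{A}^{\epsilon,\epsilon'}_T}_{L^{m/2}}\lesssim\norm{\sigma_\epsilon-\sigma_{\epsilon'}}_{L^p_x}^2$ is in fact the correct exponent (the paper states it linearly, which is harmless but imprecise), and your direct argument for the martingale property via $L^{m/2}$-contractivity of conditional expectation is cleaner than the paper's somewhat circular reference to Lemma~\ref{step one conv} and Corollary~\ref{step two conv}.
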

\begin{proof}
The above \eqref{ito iso} and \eqref{BDGII} are immediate consequences of the classical Ito isometry and Burkholder-Davis-Gundy inequality available in this setting as well as the previous Lemma \ref{identification}. Moreover, for $\epsilon, \epsilon'>0$, we have similarly
\[
\mathbb{E}\left[ \sup_{t\in [0,T]}\left( \int_0^t (\sigma_\epsilon( X_r-w^H_r)-\sigma_{\epsilon'}( X_r-w^H_r))dB_r\right)^m\right]\lesssim\norm{\mathcal{A}^{\epsilon, \epsilon'}_T}_{L^{m/2}(\Omega)} 
\]
where $\mathcal{A}^{\epsilon, \epsilon'}$ denotes the stochastic Sewing of the germ
\[
A^{\epsilon, \epsilon'}_{s,t}=(\norm{\sigma_{\epsilon}-\sigma_{\epsilon'}}_{HS}^2*L_{s,t})(X_s)
\]
Again similarly to Lemma \ref{a priori bound} one shows that
\[
\norm{A^{\epsilon, \epsilon'}_{s,t}}_{L^{m/2}(\Omega)}\lesssim \norm{\sigma_{\epsilon}-\sigma_{\epsilon'}}_{L^p}|t-s|^{\gamma_0/2}
\]
as well as 
\[
\norm{(\delta A^{\epsilon, \epsilon'})_{s,u,t}}_{L^{m/2}(\Omega)}\lesssim \norm{\sigma_{\epsilon}-\sigma_{\epsilon'}}_{L^p}|t-s|^{\gamma_0/2+\gamma_1}.
\]
From the a priori bound \eqref{stoch sewing a priori} available in the stochastic sewing lemma and the fact that $\mathcal{A}^{\epsilon, \epsilon'}_0=0$, we infer that
\begin{align*}
    \norm{\mathcal{A}^{\epsilon, \epsilon'}_T}_{L^{m/2}(\Omega)}&\lesssim \norm{\mathcal{A}^{\epsilon, \epsilon'}}_{C^{\gamma_0/2}_t L^{m/2}(\Omega)}\\
    &\lesssim \norm{A^{\epsilon, \epsilon'}}_{C^{\gamma_0/2}_t L^{m/2}(\Omega)}+\norm{(\delta A^{\epsilon, \epsilon'})}_{C^{\gamma_0/2+\gamma_1}_tL^{m/2}(\Omega)}\lesssim  \norm{\sigma_{\epsilon}-\sigma_{\epsilon'}}_{L^p}.
\end{align*}
We conclude thus that indeed the sequence $\left( \int_0^{(\cdot)} \sigma_\epsilon( X_r-w^H_r)dB_r\right)_\epsilon$ is $\mathbb{P}^H$-almost surely Cauchy in $L^{m/2}(\Omega, C([0,T]))$, allowing to define the corresponding limit as the extended stochastic integral. Remark moreover that this construction is independent of the sequence of cut-off mollifications chosen, which is immediate by replacing $\sigma_{\epsilon'}$ by $\sigma$ in the above considerations. Adaptedness follows from the fact that the sequence of approximations is adapted by classical Ito theory. Linearity follows from the fact that cut-off mollifications are linear, i.e. $(\sigma_1+\sigma_2)^\epsilon=\sigma_1^\epsilon+\sigma_2^\epsilon$ as well as the fact that the classical Ito integral is linear.  Finally, the martingale property follows from Lemma \ref{step one conv} and Corollary \ref{step two conv} above.

\end{proof}

\section*{Acknowledgement}The authors  acknowledge support from the European Research Council (ERC) under the European Union’s Horizon 2020 research
and innovation programme (grant agreement No. 949981)

	\begin{figure}[ht]
	    \centering
	  \includegraphics[height=10mm]{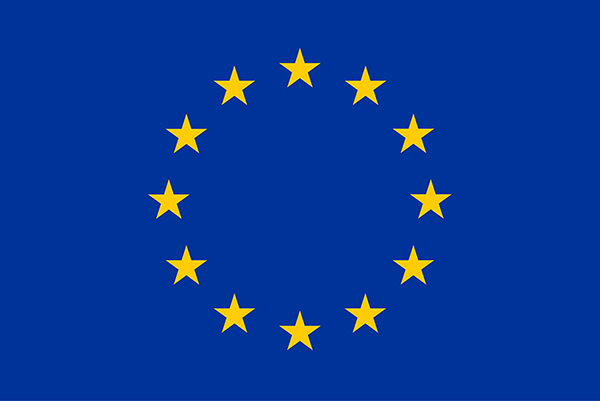}
	\end{figure}

\bibliographystyle{alpha}
\bibliography{main}

\end{document}